\documentclass[10pt]{elsarticle}
\usepackage[cp1251]{inputenc}
\usepackage[english]{babel}
\usepackage{amsmath}
\usepackage{amssymb}
\usepackage{amsfonts}
\usepackage{mathtools}
\usepackage{dsfont}
\usepackage{rotating}
\usepackage{graphicx}
\usepackage{floatflt,epsfig}
\usepackage{lineno,hyperref}
\usepackage{enumerate}
\usepackage{colortbl}
% Work with table
\usepackage{array,tabularx,tabulary,booktabs}
\usepackage{longtable}
\usepackage{multirow}
\usepackage{wrapfig}
\usepackage{subcaption}
\usepackage[table]{xcolor}
%\newcolumntype{$}{>{\global\let\currentrowstyle\relax}}
\newcolumntype{^}{>{\currentrowstyle}}

\journal{Arxiv}
\setcounter{page}{1}
\newtheorem{lemma}{Lemma}

\newtheorem{theorem}{Theorem}
\newtheorem{corollary}{Corollary}
\newtheorem{proposition}{Proposition}
\newtheorem{problem}{Problem}
\newtheorem{construction}{Construction}

%% `Elsevier LaTeX' style
\bibliographystyle{elsarticle-num}

\DeclareMathOperator{\Ker}{Ker}
\DeclareMathOperator{\image}{Im}
\DeclareMathOperator{\Aut}{Aut}
\DeclareMathOperator{\AG}{AG}

\begin{document}
\renewcommand{\abstractname}{Abstract}
\renewcommand{\refname}{References}
\renewcommand{\tablename}{Figure.}
\renewcommand{\arraystretch}{0.9}
\thispagestyle{empty}
\sloppy

\begin{frontmatter}
\title{On a correspondence between maximal cliques in Paley graphs of square order}

\author[01]{Sergey Goryainov}
\ead{sergey.goryainov3@gmail.com}

\author[02]{Alexander Masley}
\ead{masley.alexander@gmail.com }

\author[03]{Leonid Shalaginov}
\ead{44sh@mail.ru}

\address[01] {School of Mathematical Sciences,\\Hebei International Joint Research Center\\ for Mathematics and Interdisciplinary Science,\\ Hebei Normal University, Shijiazhuang  050024, P.R. China}
\address[02]{Technion -- Israel Institute of Technology,
CS Taub Building,\\ Haifa 3200003, Israel}
\address[03] {Chelyabinsk State University, 129 Bratiev Kashirinykh st.,\\Chelyabinsk 454021, Russia}

%\tnotetext[grant]{The reported study was funded by ...}

\begin{abstract}
Let $q$ be an odd prime power. Denote by $r(q)$ the value of $q$ modulo 4.
In this paper, we establish a linear fractional correspondence between two types of maximal cliques of size $\frac{q+r(q)}{2}$ in the Paley graph of order $q^2$.
\end{abstract}

\begin{keyword}
Paley graph; maximal clique;
linear fractional transformation
\vspace{\baselineskip}
\MSC[2010] 05C25\sep 11T30\sep 05E30\sep 30C10\sep 51E15
\end{keyword}
\end{frontmatter}
%%%%%%%%%%%%%%%%%%%%%%%%%%%%%%%%%%%%%%%%%%%%%%%%%%%%%%%%%%%%%%%%%%%%%%%%%%%%%%%%%%%%%%%

\section{Introduction}

Let $q_1$ be an odd prime power such that $q_1 \equiv 1 (4)$.
The \emph{Paley graph} $P(q_1)$ is the graph whose vertices are the elements of the finite field $\mathbb{F}_{q_1}$ with two vertices being adjacent
whenever their difference is a square in the multiplicative group~$\mathbb{F}_{q_1}^*$.

In this paper, we study maximal cliques in Paley graphs of square order $P(q^2)$, where $q$ is an odd prime power.
A Paley graph $P(q^2)$ is known to be a self-complementary strongly regular graph with parameters $\left( q^2, \frac{q^2-1}{2}, \frac{q^2-5}{4}, \frac{q^2-1}{4} \right)$ and  smallest eigenvalue $\frac{-1-q}{2}$.
For a strongly regular graph with valency $k$ and smallest eigenvalue $-m$, Delsarte proved (see \cite[Section 3.3.2]{D73}) that the size of a clique is at most $1 + \frac{k}{m}$.
It follows from the Delsarte bound that a maximum clique (as well as a maximum independent set) in $P(q^2)$ has size at most $q$.
The subfield $\mathbb{F}_q$ is an example of a clique of size $q$ in $P(q^2)$, which means that the Delsarte bound is tight for Paley graphs of square order.

For any odd prime power $q$, the field $\mathbb{F}_{q^2}$ can be naturally viewed as the affine plane $\AG(2,q)$.
From this point of view, all lines in $\AG(2,q)$ can be divided into two classes: quadratic lines and non-quadratic lines.
For a quadratic (resp. non-quadratic) line, the difference between any two distinct elements from this line is a square (resp. a non-square).
Thus the parallel classes of quadratic (resp. non-quadratic) lines give examples of partitions of vertices of $P(q^2)$ into maximum cliques (resp. maximum independent sets).
In particular, the subfield $\mathbb{F}_q$ in $\mathbb{F}_{q^2}$, being a quadratic line, is a clique of size $q$ in $P(q^2)$, which defines a parallel class consisting of quadratic lines in $\AG(2,q)$.
In \cite{B84}, Blokhuis showed that a clique (resp. an independent set) of size $q$ in $P(q^2)$ is necessarily a quadratic (resp. non-quadratic) line.

The following problem was first studied in \cite{BEHW96}.
    \begin{problem}\label{Prob1}
    What are maximal but not maximum cliques in Paley graphs of square order?
    \end{problem}
Since the definition of Paley graphs naturally combines addition and multiplication operations, the investigation of Problem \ref{Prob1} contributes to the structure theory of finite fields.

In the proof of \cite[Theorem 5.2]{HS91}, it was implicitly shown that, for any non-negative integer $m$ and relatively large $q$, there is a maximal clique of size approximately $\frac{q}{2^m}$ in $P(q^2)$. In particular, the case $m=0$ corresponds to the maximum clique $\mathbb{F}_q$.
Let $r(q)$ denote the reminder after division of $q$ by 4.
In \cite{BEHW96}, for any odd prime power $q$, a maximal clique in $P(q^2)$ of size $\frac{q+r(q)}{2}$ was constructed (note that this construction corresponds to the case $m=1$ in the proof of \cite[Theorem 5.2]{HS91}).
In \cite{GKSV18}, another maximal clique in $P(q^2)$ of the same size was constructed.
This clique was shown to have remarkable connection with eigenfunctions of $P(q^2)$ that have minimum cardinality of support $q+1$.
For more information on the problem of finding the minimum cardinality of support of eigenfunctions of graphs (MS-problem) and the problem of characterisation of the optimal eigenfunctions, see the survey \cite{SV21}.
The following problem belongs to this context.
    \begin{problem}\label{P2}
    Given an odd prime power $q$, what are eigenfunctions of $P(q^2)$ that have minimum cardinality of support?
    \end{problem}

The maximum cliques in a Paley graph $P(q^2)$ have size $q$, and the size $\frac{q+r(q)}{2}$ is second largest known for maximal cliques.
Thus the following problem is of interest.
    \begin{problem}\label{P3}
    Given an odd prime power $q$, are there maximal cliques in $P(q^2)$ whose size is greater than $\frac{q+r(q)}{2}$ and less than $q$?
    \end{problem}

It was found with use of computer that the cliques from \cite{BEHW96} and \cite{GKSV18} are the only maximal cliques of size $\frac{q+r(q)}{2}$ under the action of the automorphism group whenever $25 \leqslant q \leqslant 83$ holds.
On the other hand, there are extra cliques of the same size for $9 \leqslant q \leqslant 23$.

It follows from \cite[Theorem 3]{KMP16} that the subgraph of $P(q^2)$ induced by the vertices of the support of an optimal $\frac{-1-q}{2}$-eigenfunction is a complete bipartite graph.
Its parts induce cliques of size $\frac{q+1}{2}$ in the complementary graph, which is isomorphic to $P(q^2)$.
Thus a solution of the following problem would be an important step towards the solution of Problem \ref{P2}.
    \begin{problem}\label{probl}
    Given an odd prime power $q$, what are maximal cliques in $P(q^2)$ of size $\frac{q+r(q)}{2}$?
    \end{problem}

This paper is organised as follows.
In Section \ref{Preliminaries}, we list some useful notation and results.
In Section \ref{Constructions}, we reinterpret the constructions of maximal cliques and maximal independent sets of size $\frac{q+r(q)}{2}$ in $P(q^2)$ that were proposed in \cite{BEHW96, GKSV18}.
In Section \ref{Correspondence}, we introduce two linear fractional mappings, which establish correspondences between the sets from the constructions.
The results of the paper can be considered as contribution to the solution of Problem \ref{probl}.

\section{Preliminaries}\label{Preliminaries}

Let $q$ be an odd prime power.
In this section, we list some useful notation and results related to the affine plane $\AG(2,q)$, finite fields of square order, and automorphisms of $P(q^2)$.

\subsection{Affine plane $\AG(2,q)$}
Denote by $\AG(2,q)$ the point-line incidence structure, whose points are the vectors of the $2$-dimensional vector space $V(2,q)$ over $\mathbb{F}_q$, and the lines are the additive shifts of $1$-dimensional subspaces of $V(2,q)$. It is well known that $\AG(2,q)$ satisfies the axioms of a finite affine plane of order $q$.
In particular, each line contains $q$ points, and there exist $q+1$ lines through a point.
An \emph{oval} in the affine plane $\AG(2,q)$ is a set of $q+1$ points such that no three are on a line.
A line meeting an oval in one point (resp. in two points) is called \emph{tangent} (resp. \emph{secant}).
For any point of an oval, there exists a unique tangent at this point and $q$ secants.
By Qvist's theorem (see, for example, \cite[p.~147]{D68}),
given an oval in a projective plane (as well as an affine plane) of odd order and a point that does not belong to the oval, there are either $0$ or $2$ tangents to the oval through this point.

\subsection{Finite fields of square order}
Let $d$ be a non-square in $\mathbb{F}_{q}^*$. The elements of $\mathbb{F}_{q^2}$
%a finite field of order $q^2$
can be considered as
$\{x+y\alpha:\,x,y \in \mathbb{F}_q\}$, where $\alpha$ is a root of the polynomial
%$f(t) =
$t^2 - d$.
Since $\mathbb{F}_{q^2}$ is a $2$-dimensional vector space over $\mathbb{F}_q$, we can assume that the points of $\AG(2,q)$ are the elements of $\mathbb{F}_{q^2}$, and a line $l$ is presented by the elements $\{x_1+y_1\alpha + c(x_2+y_2\alpha)\}$, where $x_1+y_1\alpha \in \mathbb{F}_{q^2}$, $x_2+y_2\alpha \in \mathbb{F}_{q^2}^*$ are fixed and $c$ runs over $\mathbb{F}_q$.
The element $x_2+y_2\alpha$ is called the \emph{slope} of the line $l$.
A line $l$ is called \emph{quadratic} (resp. \emph{non-quadratic}) if its slope is a square (resp. a non-square) in $\mathbb{F}_{q^2}^*$.
Note that the slope is defined up to multiplication by a constant $c \in \mathbb{F}_{q}^*$.
Let $\beta$ be a primitive element of the finite field $\mathbb{F}_{q^2}.$
Since $\mathbb{F}_{q}^* = \langle\beta^{q+1}\rangle$, the elements of $\mathbb{F}_{q}^*$ are squares in $\mathbb{F}_{q^2}^*$,
and the difference between any two points of a quadratic (resp. non-quadratic) line is a square (resp. a non-square) in $\mathbb{F}_{q^2}^*$.
In this setting, two distinct vertices are adjacent in $P(q^2)$ if and only if the line through these points is quadratic.

    \begin{proposition}\label{LinesThroughAPoint}
    For any point of $\AG(2,q)$, there exist exactly $\frac{q+1}{2}$ quadratic and $\frac{q+1}{2}$ non-quadratic lines through this point.
    \end{proposition}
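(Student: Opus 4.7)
My plan is to count the lines through a fixed point by their slopes modulo the natural scaling action. Two nonzero elements of $\mathbb{F}_{q^2}$ define the same line through a given point if and only if they differ by multiplication by an element of $\mathbb{F}_q^*$. So the lines through a point are in bijection with the set of slope classes $\mathbb{F}_{q^2}^*/\mathbb{F}_q^*$, which has cardinality $\frac{q^2-1}{q-1} = q+1$, matching the known number of lines through a point in $\AG(2,q)$.

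Next, I would verify that the property ``slope is a square in $\mathbb{F}_{q^2}^*$'' is well defined on these slope classes. Let $\beta$ be a primitive element of $\mathbb{F}_{q^2}$, as in the preamble. Since $q$ is odd, $q+1$ is even, and from $\mathbb{F}_q^* = \langle \beta^{q+1}\rangle$ we get $\mathbb{F}_q^* \subseteq \langle \beta^2\rangle = (\mathbb{F}_{q^2}^*)^2$. Therefore multiplying a slope by any $c \in \mathbb{F}_q^*$ preserves its square/non-square type, so ``quadratic'' and ``non-quadratic'' are well-defined attributes of a line.

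Finally I would count. The squares $(\mathbb{F}_{q^2}^*)^2$ form a subgroup of index $2$ in $\mathbb{F}_{q^2}^*$, and they contain $\mathbb{F}_q^*$, so passing to the quotient $\mathbb{F}_{q^2}^*/\mathbb{F}_q^*$ yields a subgroup of index $2$ of order $\frac{q+1}{2}$, corresponding to the quadratic slope classes. Its nontrivial coset, of the same size $\frac{q+1}{2}$, gives the non-quadratic slope classes. Combining with the bijection between slope classes and lines through a point, exactly $\frac{q+1}{2}$ lines through any given point are quadratic and exactly $\frac{q+1}{2}$ are non-quadratic.

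No step here is really an obstacle; the only thing to be careful about is the inclusion $\mathbb{F}_q^* \subseteq (\mathbb{F}_{q^2}^*)^2$, which relies crucially on $q$ being odd so that $q+1$ is even. Once that observation is in place, the rest is just counting cosets of an index-$2$ subgroup.
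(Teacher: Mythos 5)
Your proof is correct, but it proceeds differently from the paper's. You identify the pencil of lines through a point with the quotient group $\mathbb{F}_{q^2}^*/\mathbb{F}_q^*$ of order $q+1$, check that the square/non-square type of a slope is constant on each coset because $\mathbb{F}_q^* \subseteq (\mathbb{F}_{q^2}^*)^2$ (which does hinge on $q+1$ being even, as you note; the paper records the same inclusion in its preliminaries via $\mathbb{F}_q^* = \langle \beta^{q+1}\rangle$), and then observe that the image of the squares is an index-$2$ subgroup of the quotient, so each type accounts for exactly $\frac{q+1}{2}$ slope classes. The paper instead reduces to the point $0$ and double-counts: the vertex $0$ has $\frac{q^2-1}{2}$ neighbours in $P(q^2)$, each quadratic line through $0$ contributes exactly $q-1$ of them, so there are $\frac{q^2-1}{2}\cdot\frac{1}{q-1}=\frac{q+1}{2}$ quadratic lines in the pencil and the remaining $\frac{q+1}{2}$ are non-quadratic. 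Your coset argument is purely algebraic and makes the well-definedness of ``quadratic line'' explicit inside the proof, and it treats the two classes symmetrically; the paper's count is shorter but leans on the known valency of the Paley graph and gets the non-quadratic count only by subtraction. Both are complete.
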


\begin{proof}
Without loss of generality, consider the pencil of lines through the point $0$.
The vertex $0$ has $\frac{q^2-1}{2}$ neighbours in $P(q^2)$.
Each of them defines a quadratic line through $0$ in $\AG(2,q)$, which contains $q-1$ squares from $\mathbb{F}_{q^2}^*$.
So, there are $\frac{q^2-1}{2} \cdot \frac{1}{q-1}$ different quadratic lines in the pencil, that is,
$\frac{q+1}{2}$ lines.
Therefore,
the rest $\frac{q+1}{2}$ lines are non-quadratic. $\square$
\end{proof}

\bigskip
For any $\gamma = x+y\alpha\in \mathbb{F}_{q^2}^*$, define the \emph{norm} mapping $N:\,\mathbb{F}^*_{q^2} \mapsto \mathbb{F}^*_{q}$ by
$$N(\gamma) := \gamma^{q+1} = \gamma\gamma^{q} = (x+y\alpha)(x-y\alpha) = x^2 - y^2d.$$
This is a homomorphism with $\image(N) = \mathbb{F}^*_q$ and
$$\Ker(N) = \big\{\, x+y\alpha\in \mathbb{F}_{q^2}^*:~x^2 - y^2d = 1 \,\big\}.$$
The first isomorphism theorem for groups implies that $\Ker(N)$ is a subgroup in $\mathbb{F}_{q^2}^*$ of order $q+1$.
In addition, it is defined by a quadratic equation, so its elements form an oval in $\AG(2,q)$.

\bigskip
Let us take a look at some properties of squares in finite fields.

    \begin{proposition}\label{sq}
    The following statements hold.
    \begin{itemize}
    \item[\rm (1)] The element $-1$ is a square in $\mathbb{F}_q^*$ if and only if $q \equiv 1(4)$.
    \item[\rm (2)] For any non-square $n$ in $\mathbb{F}_q^*$, the element $-n$ is a square in $\mathbb{F}_q^*$ if and only if $q \equiv 3(4)$.
    \end{itemize}
    \end{proposition}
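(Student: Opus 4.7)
The plan is to use the standard fact that the multiplicative group $\mathbb{F}_q^*$ is cyclic of order $q-1$, so the subgroup of squares has index $2$ and order $\frac{q-1}{2}$. Let $g$ be a generator of $\mathbb{F}_q^*$. An element $g^i$ is a square if and only if $i$ is even, and the set of non-squares is exactly the nontrivial coset $g \cdot (\mathbb{F}_q^*)^2$. In particular, the product of two non-squares is a square, which is the multiplicative fact I will use throughout.

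For part (1), I will identify $-1$ as the unique element of order $2$ in $\mathbb{F}_q^*$ (this uses only that $q$ is odd, so $-1 \neq 1$). Writing $-1 = g^{(q-1)/2}$, the element $-1$ is a square precisely when the exponent $(q-1)/2$ is even, i.e. when $q \equiv 1 \pmod 4$. Equivalently, one can observe that the squares form a cyclic subgroup of order $\frac{q-1}{2}$, which contains an element of order $2$ if and only if $\frac{q-1}{2}$ is even.

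For part (2), I will factor $-n = (-1) \cdot n$ with $n$ a given non-square. Since non-square times non-square is a square, and non-square times square is a non-square, $-n$ is a square if and only if $-1$ is also a non-square. By part (1), this happens exactly when $q \equiv 3 \pmod 4$, which gives the claim.

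There is essentially no obstacle here: the argument is a direct application of the cyclic structure of $\mathbb{F}_q^*$ together with the index-$2$ property of the squares subgroup. The only thing to be careful about is the hypothesis that $q$ is odd (so that $-1 \neq 1$ has order exactly $2$), which is already part of the standing assumption in the paper.
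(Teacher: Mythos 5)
Your proposal is correct and follows essentially the same route as the paper: writing $-1 = \delta^{(q-1)/2}$ for a primitive element $\delta$ and checking the parity of the exponent for part (1), then using multiplicativity of the square/non-square classification on $-n = (-1)\cdot n$ for part (2). No issues.
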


\begin{proof}
Let $\delta$ be a primitive element of $\mathbb{F}_{q}^*$.
Then
$\delta^{\frac{q-1}{2}} = -1$.
The left side is a square if and only if $q \equiv 1(4)$.
It is easy to see that the product of a square and a non-square (resp. two non-squares) in $\mathbb{F}_{q}^*$ is a non-square (resp. a square).
So, $-1 \cdot n$ is a square if and only if $q \equiv 3(4)$.
 $\square$
\end{proof}

    \begin{proposition}\label{sqq}
    An element $\gamma \in \mathbb{F}_{q^2}^*$ is a square if and only if $N(\gamma)$
    is a square
    in $\mathbb{F}_q^*$.
    \end{proposition}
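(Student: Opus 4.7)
The plan is to work entirely with a single primitive element of $\mathbb{F}_{q^2}^*$ and reduce both sides of the equivalence to a parity condition on its exponent. Let $\beta$ be a primitive element of $\mathbb{F}_{q^2}^*$, so that every $\gamma \in \mathbb{F}_{q^2}^*$ can be written uniquely as $\gamma = \beta^k$ for some $k \in \{0,1,\dots,q^2-2\}$. Since $|\mathbb{F}_{q^2}^*| = q^2 - 1$ is even, the squares in $\mathbb{F}_{q^2}^*$ are precisely the elements $\beta^k$ with $k$ even.

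The key observation, already used in the preceding discussion, is that $\delta := \beta^{q+1}$ generates $\mathbb{F}_q^*$: the element $\beta^{q+1}$ has order $\frac{q^2-1}{\gcd(q+1,\,q^2-1)} = q-1$, and it lies in $\mathbb{F}_q^*$ (for instance, it is a fixed point of the Frobenius $x \mapsto x^q$). I would then simply compute
$$N(\gamma) = \gamma^{q+1} = \beta^{k(q+1)} = \delta^k,$$
so that $N(\gamma)$ is a square in $\mathbb{F}_q^*$ if and only if $k$ is even (the group $\mathbb{F}_q^*$ having even order $q-1$).

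Combining the two reductions, $\gamma$ is a square in $\mathbb{F}_{q^2}^*$ if and only if $k$ is even, which happens if and only if $N(\gamma)$ is a square in $\mathbb{F}_q^*$. This yields the desired equivalence. There is no real obstacle here; the only minor point to justify carefully is that $\beta^{q+1}$ is a generator of $\mathbb{F}_q^*$, but this follows at once from the factorization $q^2 - 1 = (q-1)(q+1)$ and the fact that $\beta$ has order $q^2 - 1$.
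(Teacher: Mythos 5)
Your proof is correct, and it takes a genuinely different route from the paper's. The paper argues in two stages: first it shows directly that a square has square norm (since $N$ is a homomorphism, $N(\gamma_1^2)=N(\gamma_1)^2$), and then it obtains the converse by a counting argument — the $\frac{q^2-1}{2}$ squares of $\mathbb{F}_{q^2}^*$ are mapped by $N$, whose kernel has order $q+1$, onto exactly $\frac{q^2-1}{2}\cdot\frac{1}{q+1}=\frac{q-1}{2}$ elements of $\mathbb{F}_q^*$, which must therefore be precisely the squares of $\mathbb{F}_q^*$, so nothing outside the squares of $\mathbb{F}_{q^2}^*$ can hit a square. You instead fix a primitive element $\beta$, write $\gamma=\beta^k$, and reduce both sides of the equivalence to the parity of $k$, using that $\delta=\beta^{q+1}$ generates $\mathbb{F}_q^*$ and that both $q^2-1$ and $q-1$ are even (the latter parity is what makes ``$\delta^k$ is a square iff $k$ is even'' legitimate even though $k$ is only determined modulo $q-1$ on that side — a point you handle correctly since $k\bmod(q-1)$ has the same parity as $k$). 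Your argument gives both implications simultaneously and avoids the implicit step in the paper's counting that $\Ker(N)$ lies inside the subgroup of squares; the paper's version, on the other hand, is phrased purely in terms of the group homomorphism $N$ and does not require choosing a generator. Both are complete and correct.
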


\begin{proof}
Let $\gamma = (\gamma_1)^2$ for some $\gamma_1 \in \mathbb{F}_{q^2}^*$. Since $N$ is a homomorphism,
\begin{center}
$N(\gamma) = N \left( (\gamma_1)^2 \right) = \left( N(\gamma_1) \right)^2$.
\end{center}
It is well known that the even powers of a primitive element in a field of odd characteristic exhaust all non-zero squares.
So, there are $\frac{q^2-1}{2}$ squares in $\mathbb{F}_{q^2}^*$ and $\frac{q-1}{2}$ squares in $\mathbb{F}_{q}^*$.
Since $|\Ker(N)| = q+1$, all squares from $\mathbb{F}_{q^2}^*$ are mapped by $N$ to $\frac{q^2-1}{2} \cdot \frac{1}{q+1}$ elements from $\mathbb{F}_{q}^*$, that is,
$\frac{q-1}{2}$ elements.
As proved above, each of these elements is a square in $\mathbb{F}_{q}^*$.
Thus any preimage of a square from $\mathbb{F}_{q}^*$ is a square in $\mathbb{F}_{q^2}^*$.
$\square$
\end{proof}

    \begin{proposition}\label{sqqq}
    The element $\alpha$ is a square in $\mathbb{F}_{q^2}^*$ if and only if $q \equiv 3(4)$.
    \end{proposition}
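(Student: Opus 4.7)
The plan is to apply Proposition \ref{sqq} to reduce the question about $\alpha \in \mathbb{F}_{q^2}^*$ being a square to a question about $N(\alpha) \in \mathbb{F}_q^*$ being a square, and then invoke Proposition \ref{sq}(2) to resolve that.

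First, I would compute $N(\alpha) = \alpha \cdot \alpha^q$ explicitly. Using $\alpha^2 = d$, one rewrites
\[
\alpha^q = \alpha \cdot \alpha^{q-1} = \alpha \cdot (\alpha^2)^{(q-1)/2} = \alpha \cdot d^{(q-1)/2}.
\]
Since $d$ is a non-square in $\mathbb{F}_q^*$, Euler's criterion gives $d^{(q-1)/2} = -1$, hence $\alpha^q = -\alpha$ and
\[
N(\alpha) = \alpha \cdot (-\alpha) = -\alpha^2 = -d.
\]

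Next, by Proposition \ref{sqq}, $\alpha$ is a square in $\mathbb{F}_{q^2}^*$ if and only if $N(\alpha) = -d$ is a square in $\mathbb{F}_q^*$. Since $d$ is a non-square in $\mathbb{F}_q^*$, Proposition \ref{sq}(2) tells us that $-d$ is a square in $\mathbb{F}_q^*$ precisely when $q \equiv 3 \pmod 4$. Combining the two equivalences gives the claim.

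There is no real obstacle here: the only subtlety is the small manipulation $\alpha^q = \alpha \cdot d^{(q-1)/2} = -\alpha$, which uses Euler's criterion together with the standing assumption that $d$ is a non-square in $\mathbb{F}_q^*$. Once $N(\alpha) = -d$ is established, the two previous propositions immediately yield the desired equivalence.
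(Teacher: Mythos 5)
Your proof is correct and follows essentially the same route as the paper: the paper's proof is a one-line appeal to Propositions \ref{sq} and \ref{sqq} together with the equality $N(\alpha) = -d$, which is exactly the chain of reasoning you carry out. Your explicit derivation of $\alpha^q = -\alpha$ via Euler's criterion just fills in a computation the paper already encodes in its formula $N(x+y\alpha) = x^2 - y^2 d$.
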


\begin{proof}
It follows from Propositions \ref{sq}, \ref{sqq} and the equality $N(\alpha) = -d$. $\square$
\end{proof}

\bigskip
There are two special lines through $0$ in $\AG(2,q)$, for which we can decide in general if they are quadratic.

    \begin{proposition}\label{TwoSpecialLines}
    The following statements hold.
    \begin{itemize}
    \item[\rm (1)] The line $\{c:\,c \in \mathbb{F}_q\}$ is quadratic.
    \item[\rm (2)] The line $\{c\alpha:\,c \in \mathbb{F}_q\}$ is quadratic if and only if $q \equiv 3(4)$.
    \end{itemize}
    \end{proposition}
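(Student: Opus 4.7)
The plan is to read off both statements directly from the machinery already set up in the preliminaries, namely the description of a line through $0$ by its slope and the square/non-square criterion for slopes.

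First I would recall from the subsection on finite fields of square order that a line through $0$ has the form $\{c \cdot s : c \in \mathbb{F}_q\}$ for some fixed nonzero slope $s \in \mathbb{F}_{q^2}^*$, and that such a line is quadratic exactly when $s$ is a square in $\mathbb{F}_{q^2}^*$ (the slope being defined up to multiplication by an element of $\mathbb{F}_q^*$, which consists entirely of squares in $\mathbb{F}_{q^2}^*$ since $\mathbb{F}_q^* = \langle \beta^{q+1} \rangle$, so the property does not depend on the choice of representative).

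For part (1), the line $\{c : c \in \mathbb{F}_q\}$ has slope $1$, which is trivially a square in $\mathbb{F}_{q^2}^*$, so the line is quadratic. For part (2), the line $\{c\alpha : c \in \mathbb{F}_q\}$ has slope $\alpha$. By Proposition \ref{sqqq}, $\alpha$ is a square in $\mathbb{F}_{q^2}^*$ if and only if $q \equiv 3 \pmod 4$, which gives exactly the claimed equivalence.

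There is essentially no obstacle here: both statements are immediate corollaries of Proposition \ref{sqqq} combined with the slope interpretation of a line, once we note that the well-definedness of ``quadratic'' up to the $\mathbb{F}_q^*$-ambiguity in the slope has already been established. The only step requiring a brief remark is that rescaling the slope by an element of $\mathbb{F}_q^*$ preserves the square/non-square character in $\mathbb{F}_{q^2}^*$, but this was observed earlier in the text.
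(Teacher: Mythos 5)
Your proof is correct and follows exactly the paper's own argument: the line $\{c : c \in \mathbb{F}_q\}$ has slope $1$, a square, and the line $\{c\alpha : c \in \mathbb{F}_q\}$ has slope $\alpha$, which is a square if and only if $q \equiv 3 \pmod 4$ by Proposition \ref{sqqq}. The extra remark on the well-definedness of the quadratic property under rescaling the slope by $\mathbb{F}_q^*$ is already covered in the preliminaries and does not change the argument.
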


\begin{proof}
The line $\{c:\,c \in \mathbb{F}_q\}$ has the slope $1$, which is obviously a square.
The line $\{c\alpha:\,c \in \mathbb{F}_q\}$ has the slope $\alpha$, which is a square if and only if $q \equiv 3(4)$ in view of Proposition \ref{sqqq}.
$\square$
\end{proof}

\subsection{Automorphisms of $P(q^2)$}

The automorphism group of $P(q^2)$ is well studied and described below.

    \begin{proposition}[{\cite[Theorem 9.1]{J20}}]\label{G1}
    The automorphism group of $P(q^2)$ acts arc-transitively, and the equality
    $$\Aut \left( P(q^2) \right) = \big\{\, \gamma \mapsto a \gamma^\varepsilon+b:~a \in S,~b \in \mathbb{F}_{q^2},~\varepsilon \in {\rm Gal}(\mathbb{F}_{q^2}) \,\big\}$$
    holds, where $S$ is the set of square elements in $\mathbb{F}_{q^2}^*$.
    \end{proposition}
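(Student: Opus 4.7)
The plan is to prove the claimed equality by establishing both inclusions and to derive arc-transitivity as a byproduct.

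For the inclusion $\supseteq$, I would directly verify that each of the three generator types is a graph automorphism of $P(q^2)$. Translations $\gamma \mapsto \gamma + b$ preserve differences, hence preserve the quadratic character of $\gamma_1 - \gamma_2$ and therefore adjacency. Multiplications $\gamma \mapsto a\gamma$ by $a \in S$ preserve adjacency because the product of two squares in $\mathbb{F}_{q^2}^*$ is a square and the product of a square with a non-square is a non-square. Galois automorphisms $\varepsilon \in {\rm Gal}(\mathbb{F}_{q^2})$ preserve squareness since $\varepsilon(\gamma^2) = \varepsilon(\gamma)^2$ exhibits the image of a square as a square. A brief check of closure under composition (in particular, $\varepsilon(a) \in S$ for $a \in S$) confirms that the set forms a subgroup of $\Aut(P(q^2))$.

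Arc-transitivity then follows directly: the translations already act transitively on vertices, so it suffices, given two neighbors $c_1, c_2$ of $0$, to produce an automorphism fixing $0$ and sending $c_1$ to $c_2$. Since $c_1, c_2 \in S$, the quotient $a := c_2 c_1^{-1}$ lies in $S$, and the map $\gamma \mapsto a\gamma$ does the job.

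The main obstacle is the reverse inclusion $\subseteq$: that every automorphism of $P(q^2)$ already has the claimed form. After composing with a translation and with multiplication by a suitable square, one reduces to an automorphism $\sigma$ fixing both $0$ and $1$, and must show that $\sigma$ is then a Galois automorphism of $\mathbb{F}_{q^2}$. The strategy is a rigidity argument of Carlitz type: a bijection of $\mathbb{F}_{q^2}$ that fixes $0$ and $1$ and maps the set of nonzero squares to itself must be a field automorphism of $\mathbb{F}_{q^2}$. The delicate point is to bootstrap the preservation of adjacency (which a priori encodes only the quadratic character of differences) into preservation of both addition and multiplication; for this I would invoke the Carlitz-type rigidity result in the form cited in \cite{J20} rather than re-prove it from scratch, as the combinatorial-to-algebraic passage is the heaviest component of the argument.
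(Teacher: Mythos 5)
The paper does not prove this proposition at all --- it is quoted verbatim from \cite[Theorem 9.1]{J20} --- so there is no internal argument to compare against; your outline is the standard proof and is correct in structure: the forward inclusion and arc-transitivity are exactly the routine verifications you describe, and the reverse inclusion reduces (after composing with a translation and with multiplication by $\sigma(1)^{-1}\in S$) to Carlitz's rigidity theorem. One small imprecision: Carlitz's hypothesis is not that the map sends the set of nonzero squares to itself, but that it preserves the quadratic character of all \emph{differences} $\gamma_1-\gamma_2$ (which is what adjacency-preservation gives you); your subsequent sentence shows you are aware of this, but the statement of the rigidity result should be phrased in terms of differences. Delegating that theorem to the literature is entirely reasonable here, since the paper delegates the whole proposition.
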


    \begin{corollary}\label{G1_lines}
    The group $\Aut \left( P(q^2) \right)$ preserves the set of quadratic lines and the set of non-quadratic lines. Moreover, it acts transitively on each of these sets.
    \end{corollary}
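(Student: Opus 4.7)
The plan is to invoke Proposition \ref{G1} directly and to track how a general automorphism $\phi(\gamma) = a\gamma^{\varepsilon} + b$, with $a \in S$, $b \in \mathbb{F}_{q^2}$, and $\varepsilon \in {\rm Gal}(\mathbb{F}_{q^2})$, acts on the slope of a line. First I would write a line $L$ in the form $\{p + cs : c \in \mathbb{F}_q\}$ with slope $s$ and compute
$$\phi(L) = \{a p^{\varepsilon} + b + c^{\varepsilon}(a s^{\varepsilon}) : c \in \mathbb{F}_q\}.$$
Because $\mathbb{F}_q$ is the unique subfield of order $q$ in $\mathbb{F}_{q^2}$, it is stable under every Galois automorphism, so $\{c^{\varepsilon} : c \in \mathbb{F}_q\} = \mathbb{F}_q$. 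Thus $\phi(L)$ is a line with slope $a s^{\varepsilon}$.

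Next I would observe that $\varepsilon$ preserves both squares and non-squares in $\mathbb{F}_{q^2}^*$ (since $(t^{\varepsilon})^2 = (t^2)^{\varepsilon}$ and $\varepsilon$ is a bijection). As $a$ is itself a square, the new slope $a s^{\varepsilon}$ is a square if and only if $s$ is. This gives the preservation of the sets of quadratic and non-quadratic lines.

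For the transitivity, I plan to reduce to transitivity of $\Aut(P(q^2))$ on ordered pairs of distinct vertices of each adjacency type, using the fact that two distinct points of $\AG(2,q)$ determine a unique line. Transitivity on quadratic lines follows from the arc-transitivity of $\Aut(P(q^2))$ stated in Proposition \ref{G1}: pick an arc in each of two quadratic lines $L_1, L_2$, map one to the other, and observe that the lines they span must also be identified. Transitivity on non-quadratic lines requires the analogous statement for non-edges, which I would prove directly by combining (i) vertex-transitivity via the translation subgroup $\{\gamma \mapsto \gamma + b\}$, with (ii) transitivity of the stabilizer of $0$ on non-squares of $\mathbb{F}_{q^2}^*$: for non-squares $n_1, n_2$, the ratio $n_2 n_1^{-1}$ is a product of two non-squares and hence a square, so $\gamma \mapsto (n_2 n_1^{-1})\gamma$ is an element of $\Aut(P(q^2))$ sending $n_1$ to $n_2$.

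The one step I expect to require real care is the assertion that $\varepsilon$ restricts to a permutation of $\mathbb{F}_q$, since without it the computation of $\phi(L)$ would not yield a line in the required parametrised form; the remaining steps are straightforward bookkeeping with squares in $\mathbb{F}_{q^2}^*$.
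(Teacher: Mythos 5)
Your proposal is correct and follows essentially the same two-step route as the paper: first decompose a general automorphism into a Galois twist, multiplication by a square, and a translation, and check that the slope of a line is sent to a square multiple of its Galois image (so quadraticity is preserved); then deduce transitivity on each class of lines from transitivity on ordered pairs of adjacent, respectively non-adjacent, vertices together with the fact that two distinct points span a unique line. The only point where you diverge is the non-adjacent case: the paper obtains arc-transitivity on the complement of $P(q^2)$ by invoking self-complementarity, whereas you construct the required automorphisms explicitly --- a translation moving one endpoint to $0$, followed by multiplication by $n_2 n_1^{-1}$, which is a square because it is a ratio of two non-squares. Both arguments are valid; yours is slightly more self-contained (it does not rely on knowing that $P(q^2)$ is self-complementary), while the paper's is shorter. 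Your care about $\varepsilon$ restricting to a permutation of $\mathbb{F}_q$ is well placed and is justified exactly as you suggest, since $\mathbb{F}_q$ is the unique subfield of its order in $\mathbb{F}_{q^2}$.
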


\begin{proof}
Every automorphism of $P(q^2)$ is a composition of a field automorphism, multiplication by a non-zero square from $\mathbb{F}_{q^2}^*$, and an additive shift by an element from $\mathbb{F}_{q^2}$.
It is easy to see that each of these operations preserves the sets of quadratic and non-quadratic lines, so the whole automorphism group does this as well.

Since the graph $P(q^2)$ is self-complementary, the group $\Aut \left( P(q^2) \right)$ acts arc-transitively on the complement of $P(q^2)$.
Recall that any two distinct adjacent (resp. non-adjacent) vertices of $P(q^2)$ uniquely determine a quadratic (resp. non-quadratic) line.
Thus, $\Aut \left( P(q^2) \right)$ acts transitively on the set of quadratic (resp. non-quadratic) lines. $\square$
\end{proof}

    \begin{corollary}\label{G1_points}
    There is a subgroup in $\Aut \left( P(q^2) \right)$ that stabilises the quadratic line $\mathbb{F}_q$ and acts faithfully on the set of points that do not belong to $\mathbb{F}_q$.
    \end{corollary}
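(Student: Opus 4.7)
The plan is to exhibit an explicit subgroup of $\Aut(P(q^2))$ satisfying both conditions, using the description from Proposition~\ref{G1}. A natural candidate is the affine subgroup
$$H = \{\,\gamma \mapsto a\gamma + b : a \in \mathbb{F}_q^*,\ b \in \mathbb{F}_q\,\}.$$
Each such map belongs to $\Aut(P(q^2))$, since every element of $\mathbb{F}_q^*$ is a square in $\mathbb{F}_{q^2}^*$ (as observed in the preliminaries, $\mathbb{F}_q^* = \langle \beta^{q+1}\rangle$ with $q+1$ even) and $b \in \mathbb{F}_{q^2}$. Closure of $H$ under composition and inverses is a direct calculation with the composition formula $(a', b') \cdot (a, b) = (a'a,\, a'b + b')$.

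Next, I would verify that $H$ stabilises $\mathbb{F}_q$ setwise: for any $c \in \mathbb{F}_q$, the image $ac + b$ again lies in $\mathbb{F}_q$, and the restriction to $\mathbb{F}_q$ is a bijection since $a \neq 0$. Consequently each element of $H$ also permutes the complement $\mathbb{F}_{q^2} \setminus \mathbb{F}_q$, so the action of $H$ on this set is well-defined.

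For faithfulness, suppose two elements $\gamma \mapsto a\gamma + b$ and $\gamma \mapsto a'\gamma + b'$ of $H$ coincide on $\mathbb{F}_{q^2} \setminus \mathbb{F}_q$. Then the equality $(a - a')\gamma = b' - b$ holds for every $\gamma$ in a set of cardinality $q^2 - q \geq 6$, which forces $a = a'$ and $b = b'$. Thus distinct elements of $H$ induce distinct permutations of $\mathbb{F}_{q^2} \setminus \mathbb{F}_q$, which is the required faithfulness. No substantial obstacle is expected; the only point of care is to notice that $|\mathbb{F}_{q^2} \setminus \mathbb{F}_q| \geq 2$, so a nonzero linear equation in $\gamma$ cannot be satisfied on the whole complement, and this holds trivially since $q \geq 3$ is odd.
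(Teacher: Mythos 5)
Your proposal is correct and uses exactly the same subgroup $\{\gamma \mapsto a\gamma+b : a \in \mathbb{F}_q^*,\, b \in \mathbb{F}_q\}$ that the paper names in its (one-line) proof; you simply spell out the routine verifications of membership, closure, stabilisation, and faithfulness that the paper leaves implicit.
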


\begin{proof}
This subgroup consists of the mappings $\gamma \mapsto a\gamma+b$,
where $a \in \mathbb{F}_q^*$ and $b \in \mathbb{F}_q$.
$\square$
\end{proof}

\section{Constructions}\label{Constructions}

In this section, we reinterpret the constructions of maximal cliques and maximal independent sets of size $\frac{q+r(q)}{2}$ in $P(q^2)$ that were proposed in \cite{BEHW96, GKSV18}.

\subsection{{\rm (}$\mathbb{F}_q,\alpha{\rm )}$-construction}

The first construction depends on the choice of a line and a point that is not on the line.
By Corollary \ref{G1_lines}, we may choose any line.
In view of Corollary \ref{G1_points}, given a line, we may choose any point that is not on the line.

Let us consider the quadratic line $\mathbb{F}_q$, the point $\alpha \notin \mathbb{F}_q$, and the pencil of quadratic lines through $\alpha$.
Each line of the pencil, except $\{c+\alpha:\,c \in \mathbb{F}_q\}$, intersects $\mathbb{F}_q$ at a point.
Denote by $c_1, \ldots, c_{\frac{q-1}{2}}$ all the intersection points.
So, the set $\{\alpha, c_1, \ldots, c_{\frac{q-1}{2}}\}$ is a clique of size $\frac{q+1}{2}$ in $P(q^2)$.
The points $-\alpha$ and $\alpha$ have the same neighbours in $\mathbb{F}_q$ due to Proposition \ref{sqq}.
Therefore, the set $\{-\alpha, c_1, \ldots, c_{\frac{q-1}{2}}\}$ is also a clique.
The first and the second cliques are equivalent under the field automorphism
$\gamma \mapsto \gamma^q$ (an analogue of the complex conjugation),
which acts as $\alpha \mapsto -\alpha$ and $c_k \mapsto c_k$.
By Proposition \ref{TwoSpecialLines}(2),
for $q \equiv 3(4)$, the line $\{c\alpha:\,c \in \mathbb{F}_q\}$, which contains $-\alpha$ and $\alpha$, is quadratic.
Hence the two cliques are combined into one.

    \begin{construction}\label{C1}
    If $q \equiv 1(4)$, then
    \begin{center}
    $\{\alpha, c_1, \ldots, c_{\frac{q-1}{2}}\}$
    is a maximal clique
    \end{center}
    of size $\frac{q+1}{2}$ in $P(q^2)$; if $q \equiv 3(4)$, then
    \begin{center}
    $\{\pm\alpha, c_1, \ldots, c_{\frac{q-1}{2}}\}$ is a maximal clique
    \end{center}
     of size $\frac{q+3}{2}$ in $P(q^2)$.
    \end{construction}

\subsection{{\rm(}$\alpha\mathbb{F}_q,1 {\rm)}$-construction}

The second construction is similar to Construction \ref{C1}.
It uses the line $\alpha \mathbb{F}_q$ and the point $1 \notin \alpha \mathbb{F}_q$.
For $q\equiv 1(4)$ (resp. $q\equiv 3(4)$), consider the pencil of non-quadratic (resp. quadratic) lines through $1$.
They all, except $\{1+c\alpha:\,c \in \mathbb{F}_q\}$, intersect $\alpha\mathbb{F}_q$.
Denote the intersection points by $c_1\alpha, \ldots, c_{\frac{q-1}{2}}\alpha$.
By Proposition \ref{sqq}, the elements $-1$ and $1$ have the same neighbours in $\alpha\mathbb{F}_q$.
Thus $\{1, c_1\alpha, \ldots, c_{\frac{q-1}{2}}\alpha\}$ and $\{-1, c_1\alpha, \ldots, c_{\frac{q-1}{2}}\alpha\}$ are independent sets (resp. cliques) of size $\frac{q+1}{2}$ in $P(q^2)$.
They are equivalent under the automorphism $\gamma \mapsto -\gamma^q$ (resp. they are equivalent under the automorphism $\gamma \mapsto -\gamma^q$ and combined into one due to Proposition \ref{TwoSpecialLines}(1)).

    \begin{construction}\label{C2}
    If $q \equiv 1(4)$, then
    \begin{center}
    $\{1, c_1\alpha, \ldots, c_{\frac{q-1}{2}}\alpha\}$
    is a maximal independent set
    \end{center}
    of size $\frac{q+1}{2}$ in $P(q^2)$; if $q \equiv 3(4)$, then
    \begin{center}
    $\{\pm1, c_1\alpha, \ldots, c_{\frac{q-1}{2}}\alpha\}$ is a maximal clique
    \end{center}
     of size $\frac{q+3}{2}$ in $P(q^2)$.
    \end{construction}

The next Lemma shows that the sets from Constructions \ref{C2} and \ref{C1} are equivalent under the mapping $\gamma \mapsto \alpha \gamma$.

    \begin{lemma}\label{multiplicationByAlpha}
    Let $C$ be a maximal clique (resp. a maximal independent set) in $P(q^2)$. If $q \equiv 1(4)$, then
    \begin{center}
    $\alpha C$ is a maximal independent set (resp. a maximal clique) in $P(q^2)$;
    \end{center}
    if $q \equiv 3(4)$, then
    \begin{center}
    $\alpha C$ is a maximal clique (resp. a maximal independent set) in $P(q^2)$.
    \end{center}
    \end{lemma}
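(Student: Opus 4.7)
The strategy is to study the multiplication map $\phi\colon \mathbb{F}_{q^2} \to \mathbb{F}_{q^2}$ defined by $\phi(\gamma) = \alpha\gamma$, and to show that it is either a graph automorphism of $P(q^2)$ or an isomorphism from $P(q^2)$ to its complement $\overline{P(q^2)}$, depending on the residue of $q$ modulo $4$. Since $\phi(x) - \phi(y) = \alpha(x - y)$, whether $\phi$ preserves or inverts the adjacency relation of $P(q^2)$ is entirely controlled by whether $\alpha$ is a square in $\mathbb{F}_{q^2}^*$.

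First I would invoke Proposition \ref{sqqq}, which states that $\alpha$ is a square in $\mathbb{F}_{q^2}^*$ iff $q \equiv 3(4)$. Combined with the elementary fact that a product of two squares or two non-squares in $\mathbb{F}_{q^2}^*$ is a square, while a product of a square and a non-square is a non-square, this yields the following dichotomy. If $q \equiv 3(4)$, then multiplying any difference $x - y$ by the square $\alpha$ leaves its quadratic character unchanged, so $\phi$ preserves adjacency and is a graph automorphism of $P(q^2)$. If $q \equiv 1(4)$, then $\alpha$ is a non-square and multiplication by $\alpha$ interchanges squares and non-squares; thus $\phi$ sends edges to non-edges and vice versa, i.e.\ $\phi$ is an isomorphism between $P(q^2)$ and $\overline{P(q^2)}$.

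In both cases $\phi$ is a bijection of the vertex set $\mathbb{F}_{q^2}$, and therefore sends maximal cliques (resp.\ maximal independent sets) to maximal cliques (resp.\ maximal independent sets) in its target graph. When $q \equiv 3(4)$, the target is $P(q^2)$ itself, giving the second half of the claim directly. When $q \equiv 1(4)$, the target is $\overline{P(q^2)}$; since cliques (resp.\ independent sets) in $\overline{P(q^2)}$ are precisely independent sets (resp.\ cliques) in $P(q^2)$, the image $\alpha C$ of a maximal clique (resp.\ maximal independent set) $C$ is a maximal independent set (resp.\ maximal clique) in $P(q^2)$, giving the first half.

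There is no substantive obstacle beyond the verification that maximality is preserved, and this follows immediately because $\phi$ is a bijection of the entire vertex set: any extension of $\alpha C$ in the target would pull back via $\phi^{-1} = \alpha^{-1}(\cdot)$ to a corresponding extension of $C$ in the source, contradicting the maximality of $C$.
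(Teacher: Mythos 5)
Your proposal is correct and is essentially the paper's own argument, merely written out in full: the paper's one-line proof ("It follows from Proposition \ref{sqqq} and the fact that $P(q^2)$ is self-complementary") is exactly your observation that multiplication by $\alpha$ preserves or flips the quadratic character of differences according to whether $\alpha$ is a square, making $\gamma \mapsto \alpha\gamma$ an automorphism of $P(q^2)$ or an isomorphism onto its complement. No gaps; your maximality remark via the inverse map is the standard justification the paper leaves implicit.
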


\begin{proof}
It follows from Proposition \ref{sqqq} and the fact that $P(q^2)$ is self-complementary.
$\square$
\end{proof}

\subsection{$Q$-construction}

The third construction was proposed in \cite{GKSV18}.
Let $\beta$ be a primitive element in $\mathbb{F}_{q^2}$ and $\omega = \beta^{q-1}$.
Then $\omega$ is a square in $\mathbb{F}_{q^2}^*$, and $\langle\omega\rangle$ is a subgroup of order $q+1$ in $\mathbb{F}_{q^2}^*$.
Denote it by $Q$.
On the other hand, $\Ker(N)$ is also a subgroup of order $q+1$ in $\mathbb{F}_{q^2}^*$.
Since the subgroups of a finite cyclic group are uniquely determined by the divisors of the group order,
$Q$ and  $\Ker(N)$ coincide.
Thus the points of $Q$ form an oval in $\AG(2,q)$.
Put
\begin{center}
$Q_0 := \langle\omega^2\rangle$ \quad and \quad $Q_1 := \omega  \langle\omega^2\rangle$.
\end{center}
Obviously, $Q = Q_0 \cup Q_1$ and $Q_1 = \omega Q_0$.

    \begin{construction}[{\cite[Theorem 1]{GKSV18}}]\label{C3}
    If $q \equiv 1 (4)$, then
    \begin{center}
    $Q_0$ and $Q_1$ are maximal independent sets
    \end{center}
    of size $\frac{q+1}{2}$ in $P(q^2)$;
    if $q \equiv 3 (4)$, then
    \begin{center}
    $Q_0 \cup \{0\}$ and $Q_1 \cup \{0\}$ are maximal cliques
    \end{center}
    of size $\frac{q+3}{2}$ in $P(q^2)$.
    \end{construction}

\subsection{$\alpha Q$-construction}

The fourth construction is a corollary of Construction \ref{C3} and Lemma \ref{multiplicationByAlpha}.

    \begin{construction}\label{C4}
    If $q \equiv 1 (4)$, then
    \begin{center}
    $\alpha Q_0$ and $\alpha Q_1$ are maximal cliques
    \end{center}
    of size $\frac{q+1}{2}$ in $P(q^2)$;
    if $q \equiv 3 (4)$, then
    \begin{center}
    $\alpha Q_0 \cup \{0\}$ and $\alpha Q_1 \cup \{0\}$ are maximal cliques
    \end{center}
    of size $\frac{q+3}{2}$ in $P(q^2)$.
    \end{construction}

\section{Correspondence between constructions}\label{Correspondence}

In this section, we introduce two linear fractional mappings, which establish correspondences between the sets from Constructions \ref{C3} and \ref{C2}, and between the sets from Constructions \ref{C4} and \ref{C1}.

\subsection{$\varphi$-correspondence between $Q$- and {\rm(}$\alpha\mathbb{F}_q,1 {\rm)}$-constructions}
\label{mappingPhi}

Let us define
the first mapping $\varphi:\mathbb{F}_{q^2} \mapsto \mathbb{F}_{q^2}$ by the rule
$$
\varphi(\gamma):=
\begin{cases}
\frac{\gamma+1}{\gamma-1} & \text{if~~}  \gamma \not= 1,\\
~~1 & \text{if~~} \gamma=1.
\end{cases}
$$
Take a look at its properties.

\begin{lemma}\label{phiIsBijAndInv}
The mapping $\varphi$ is a bijection and an involution.
\end{lemma}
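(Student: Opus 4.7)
The plan is to reduce the statement to a single direct computation. Since any involution is automatically a bijection (being its own two-sided inverse), it suffices to prove $\varphi \circ \varphi = \mathrm{id}_{\mathbb{F}_{q^2}}$.

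First I would check that $\varphi$ is well defined, which for $\gamma \ne 1$ is immediate because $\gamma - 1 \ne 0$ in $\mathbb{F}_{q^2}$. Then I would split into two cases on $\gamma$. If $\gamma = 1$, the convention $\varphi(1) = 1$ makes $\varphi(\varphi(1)) = 1$ trivial. If $\gamma \ne 1$, before applying the formula twice I need to verify that the intermediate value $\varphi(\gamma)$ is itself different from $1$, so that the nontrivial branch of the definition applies again; this reduces to observing that $\frac{\gamma+1}{\gamma-1} = 1$ would force $2 = 0$, which fails because $q$ is odd.

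With both values confirmed to lie in the domain of the rational branch, I would then compute $\varphi\!\left(\frac{\gamma+1}{\gamma-1}\right)$ by clearing denominators; the numerator collapses to $2\gamma$ and the denominator to $2$, yielding $\gamma$ back. This finishes both the involution claim and, by the remark above, the bijection claim.

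I do not expect any real obstacle. The only subtle point is the bookkeeping at the fixed point $\gamma = 1$, which is precisely why the piecewise definition was set up this way, and the use of the odd characteristic of $\mathbb{F}_{q^2}$, which intervenes both in ruling out $\varphi(\gamma) = 1$ for $\gamma \ne 1$ and in dividing by $2$ at the final step.
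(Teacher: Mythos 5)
Your proof is correct, and it takes a slightly but genuinely different route from the paper's. The paper proves bijectivity and involutivity as two separate facts: it first shows $\varphi$ is injective on $\mathbb{F}_{q^2}\setminus\{1\}$ (by solving $\varphi(\gamma_1)=\varphi(\gamma_2)$) and invokes finiteness of the field to conclude bijectivity, and only then computes $\varphi^2=\mathrm{id}$. You collapse this into a single computation by observing that an involution is automatically its own two-sided inverse, hence a bijection; this makes the injectivity argument and the appeal to finiteness unnecessary. Your version is more economical and also more careful on one point the paper glosses over: you explicitly verify that $\varphi(\gamma)\neq 1$ for $\gamma\neq 1$ \emph{before} applying the rational branch a second time (reducing it to $2\neq 0$), whereas the paper only records this fact inside its injectivity argument and then applies the formula for $\varphi^2$ without comment. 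The only thing the paper's approach ``buys'' is that injectivity is established by a method (finite set, injection implies bijection) that does not rely on having an explicit inverse, but since the explicit inverse is $\varphi$ itself and is computed anyway, nothing is lost in your version.
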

\begin{proof}
First, we show that $\varphi$ is a bijection.
Since we are dealing with finite fields,
it suffices to prove that $\varphi$ is an injection.
Let $\gamma_1,\gamma_2 \in \mathbb{F}_{q^2} \setminus \{1\}$.
Suppose $\varphi(\gamma_1) = \varphi(\gamma_2)$ or, equivalently,
$$
\frac{\gamma_1+1}{\gamma_1-1} = \frac{\gamma_2+1}{\gamma_2-1}.
$$
After standard manipulations, this equation becomes $\gamma_1 = \gamma_2$.
So, if $\gamma_1 \neq \gamma_2$ then $\varphi(\gamma_1) \neq \varphi(\gamma_2)$.
In addition, if $\varphi(\gamma_3)=1$ for some $\gamma_3 \in \mathbb{F}_{q^2}$, then the only possible value for $\gamma_3$ is $1$ (otherwise, we have a contradiction).
Thus the image $\varphi(\mathbb{F}_{q^2} \setminus \{1\})$ has
$q^2-1$ elements and does not contain $1$,
which means that $\varphi$ is an injection and, consequently, a bijection.

Second, we prove that $\varphi$ is an involution.
By definition, we need to check that $\varphi^2$ is the identity mapping.
Let $\gamma \in \mathbb{F}_{q^2} \setminus \{1\}$.
Then
$$
\varphi^2(\gamma) = \frac{\varphi(\gamma)+1}{\varphi(\gamma)-1} =
\frac{\frac{\gamma+1}{\gamma-1}+1}{\frac{\gamma+1}{\gamma-1}-1}.
$$
It is easy to see that the last expression equals $\gamma$. $\square$
\end{proof}

    \begin{proposition}\label{Q0Image1}
    Let $\gamma$ be an element from $Q \setminus \{1\}$, where $\gamma = x + y\alpha$ for some $x,y \in \mathbb{F}_{q}$. Then the following formula holds
    \begin{center}
        $\displaystyle \varphi(\gamma) = \frac{y}{x-1}\alpha$.
    \end{center}
    \end{proposition}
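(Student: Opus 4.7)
The plan is to compute $\varphi(\gamma) = \frac{\gamma+1}{\gamma-1}$ directly by substituting $\gamma = x + y\alpha$ and rationalising, then applying the norm condition that characterises $Q$.

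First I would note that since $Q = \langle \omega \rangle = \Ker(N)$ (as observed just before Construction \ref{C3}), every $\gamma = x + y\alpha$ in $Q$ satisfies $N(\gamma) = x^2 - y^2 d = 1$. I would record this relation, together with its rewriting $(x-1)(x+1) = y^2 d$, since both of these identities should appear naturally when we simplify the quotient $(\gamma+1)/(\gamma-1)$.

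Next, writing
\begin{equation*}
\varphi(\gamma) = \frac{(x+1) + y\alpha}{(x-1) + y\alpha},
\end{equation*}
I would multiply numerator and denominator by the Galois conjugate $(x-1) - y\alpha$ to move the $\alpha$ into the numerator. Using $\alpha^2 = d$, the denominator becomes $(x-1)^2 - y^2 d$, which simplifies to $-2(x-1)$ thanks to $x^2 - y^2 d = 1$. The real part of the numerator is $(x^2 - 1) - y^2 d$, which vanishes by the same norm identity, leaving only the $\alpha$-term $-2y\alpha$. Cancelling $-2$ from numerator and denominator gives the claimed formula $\varphi(\gamma) = \frac{y}{x-1}\alpha$.

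I should also check that the manipulation is legal, i.e.\ that $x \ne 1$; but $x = 1$ combined with $x^2 - y^2 d = 1$ forces $y = 0$ (because $d \ne 0$), which would give $\gamma = 1$, excluded by hypothesis. So the denominators on both sides are non-zero, and the computation goes through. The whole argument is a short direct calculation; the only mild obstacle is bookkeeping when rationalising, and the key insight is simply that the norm relation $x^2 - y^2 d = 1$ collapses both the real part of the numerator and the denominator in exactly the right way.
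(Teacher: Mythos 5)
Your proposal is correct and follows essentially the same route as the paper: rationalise $\frac{(x+1)+y\alpha}{(x-1)+y\alpha}$ by the conjugate $(x-1)-y\alpha$ and use the norm relation $x^2 - y^2d = 1$ to kill the constant part of the numerator and simplify the denominator to $-2(x-1)$. The extra check that $x \ne 1$ (so the formula's denominator is non-zero) is a small addition the paper leaves implicit.
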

    \begin{proof}
        Substitute the expression for $\gamma$ into the definition of $\varphi$:
        $$
        \varphi(\gamma) = \frac{x+y\alpha+1}{x+y\alpha-1} = \frac{(x+1+y\alpha)(x-1-y\alpha)}{(x-1+y\alpha)(x-1-y\alpha)} = \frac{x^2-(1+y\alpha)^2}{(x-1)^2-y^2d}.
        $$
        Expand the brackets and use the equation for $Q$, that is, $x^2 - y^2d = 1$:
        $$
        \varphi(\gamma) = \frac{x^2-1-2y\alpha -y^2d}{x^2-2x+1-y^2d} = \frac{y}{x-1}\alpha.
        ~ \square
        $$
    \end{proof}

    \begin{corollary}
        The set $Q\setminus \{1\}$ is mapped to $\alpha\mathbb{F}_{q}$ by $\varphi$ bijectively.
    \end{corollary}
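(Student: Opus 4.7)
The plan is to combine Proposition \ref{Q0Image1} with a short cardinality argument. Proposition \ref{Q0Image1} already provides the crucial formula $\varphi(x+y\alpha) = \frac{y}{x-1}\alpha$ for any $\gamma = x+y\alpha \in Q \setminus \{1\}$. Since $\frac{y}{x-1} \in \mathbb{F}_q$, this immediately yields the inclusion $\varphi(Q \setminus \{1\}) \subseteq \alpha \mathbb{F}_q$, which is the geometric content of the statement.

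Next I would upgrade this inclusion to an equality by counting. The group $Q = \Ker(N)$ has order $q+1$, so $|Q \setminus \{1\}| = q$, which equals $|\alpha \mathbb{F}_q|$. By Lemma \ref{phiIsBijAndInv}, the map $\varphi$ is a bijection on $\mathbb{F}_{q^2}$, hence its restriction to the subset $Q \setminus \{1\}$ is automatically injective. An injective map between two finite sets of the same cardinality is a bijection, so $\varphi$ restricts to a bijection $Q \setminus \{1\} \to \alpha \mathbb{F}_q$.

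This argument is pure bookkeeping and I do not expect any serious obstacle. The only small care point is to verify that the formula from Proposition \ref{Q0Image1} covers every element of $Q \setminus \{1\}$, including the degenerate case $y = 0$: from $x^2 - y^2 d = 1$ with $y = 0$ one gets $x = \pm 1$, and excluding $\gamma = 1$ leaves $\gamma = -1$, whose image $\varphi(-1) = 0 = 0 \cdot \alpha$ lies in $\alpha \mathbb{F}_q$ as required. With this noted, the proposition follows.
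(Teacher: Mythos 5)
Your proposal is correct and matches what the paper intends: the corollary is stated without proof precisely because it follows from Proposition \ref{Q0Image1} (which gives the inclusion into $\alpha\mathbb{F}_q$) together with the injectivity of $\varphi$ from Lemma \ref{phiIsBijAndInv} and the count $|Q\setminus\{1\}| = q = |\alpha\mathbb{F}_q|$. Your extra check of the $y=0$ case ($\gamma=-1$, $\varphi(-1)=0$) is a sensible bit of care but introduces nothing beyond the paper's implicit argument.
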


    \begin{lemma}\label{Q0Image2}
    Let $\gamma$ be an element from $Q \setminus \{1, -1\}$, where $\gamma = x + y\alpha$ for some $x,y \in \mathbb{F}_{q}$.
    Then the following formula holds
    \begin{center}
        $\displaystyle \varphi(\gamma^2) = \frac{x}{yd}\alpha$.
    \end{center}
    \end{lemma}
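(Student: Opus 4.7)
The plan is to reduce the claim to Proposition \ref{Q0Image1} applied to $\gamma^2$. Since $N$ is a group homomorphism and $\gamma \in Q = \Ker(N)$, we have $\gamma^2 \in Q$ as well, so Proposition \ref{Q0Image1} is applicable provided $\gamma^2 \neq 1$, which is exactly the reason for excluding $\pm 1$ from the statement.

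First, I would expand $\gamma^2 = (x+y\alpha)^2$ using $\alpha^2 = d$ to get $\gamma^2 = (x^2 + y^2 d) + (2xy)\alpha$. Setting $X := x^2 + y^2 d$ and $Y := 2xy$, Proposition \ref{Q0Image1} gives
$$\varphi(\gamma^2) = \frac{Y}{X-1}\alpha = \frac{2xy}{x^2 + y^2 d - 1}\alpha.$$
Second, I would use the defining equation of $Q$, namely $x^2 - y^2 d = 1$, to rewrite the denominator: $x^2 + y^2 d - 1 = (x^2 - y^2 d) + 2y^2 d - 1 = 2y^2 d$. Substitution then yields $\varphi(\gamma^2) = \frac{2xy}{2y^2 d}\alpha = \frac{x}{yd}\alpha$, which is the desired formula.

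The only point requiring care is that both intermediate denominators be nonzero. The assumption $\gamma \neq \pm 1$ forces $\gamma^2 \neq 1$, which is what guarantees $X - 1 \neq 0$ and hence the applicability of Proposition \ref{Q0Image1}. For the final expression, we need $y \neq 0$; but $y = 0$ combined with $x^2 - y^2 d = 1$ would force $x = \pm 1$, i.e.\ $\gamma = \pm 1$, contradicting the hypothesis. Since the computation is purely algebraic, there is no real obstacle; the only thing to watch is the bookkeeping of when $\gamma^2 = 1$ is excluded so that Proposition \ref{Q0Image1} applies cleanly.
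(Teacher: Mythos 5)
Your proposal is correct and follows essentially the same route as the paper: expand $\gamma^2=(x^2+y^2d)+2xy\alpha$, apply Proposition~\ref{Q0Image1} to this element of $Q$, and simplify the denominator via $x^2-y^2d=1$. The only difference is that you make explicit the (easy) checks that $\gamma^2\neq 1$ and $y\neq 0$, which the paper leaves implicit.
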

    \begin{proof}
        Introduce temporary variables
        $$\gamma^2 = (x+y\alpha)^2 = \underbrace{x^2+y^2d}_{x_1}+\underbrace{2xy}_{y_1}\alpha,
        $$
        and apply Proposition \ref{Q0Image1}:
        $$
        \varphi \left( \gamma^2 \right) = \varphi(x_1+y_1\alpha) = \frac{y_1}{x_1-1}\alpha.
        $$
        Return to the original notation and take into account the equation for $Q$:
        $$
        \varphi \left( \gamma^2 \right) = \frac{2xy}{x^2+y^2d-1}\alpha = \frac{2xy}{2y^2d}\alpha = \frac{x}{yd}\alpha.
        ~ \square
        $$
    \end{proof}

The first of two main results of the paper is formulated below.

    \begin{theorem} \label{3corresponds2}
    Let $Q_0$ be the set from Construction \ref{C3}.
    If $q \equiv 1(4)$, then $\varphi(Q_0)$ coincides with the maximal independent set
    from Construction \ref{C2}, that is,
    $$
    \varphi(Q_0) = \{1, c_1\alpha, \ldots, c_{\frac{q-1}{2}}\alpha\}.
    $$
    If $q \equiv 3(4)$, then $\varphi(Q_0 \cup \{0\})$ coincides with the maximal clique from Construction \ref{C2}, that is,
    $$
    \varphi(Q_0 \cup \{0\}) = \{\pm 1, c_1\alpha, \ldots, c_{\frac{q-1}{2}}\alpha\}.
    $$
    \end{theorem}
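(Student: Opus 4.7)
The plan is threefold: (i) give an arithmetic characterisation of the scalars $c_i$ appearing in Construction \ref{C2}; (ii) compute $\varphi(\gamma)$ for $\gamma \in Q_0$ (resp. $Q_0 \cup \{0\}$) element by element and verify that each image lies in the target; (iii) close the argument by a cardinality count using that $\varphi$ is a bijection.

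For (i), the line through $1$ and $c\alpha$ has slope $c\alpha - 1$, whose norm equals $N(c\alpha - 1) = (c\alpha - 1)(-c\alpha - 1) = 1 - c^2 d$, and this is always non-zero since $d$ is a non-square. By Proposition \ref{sqq} the line is quadratic iff $1 - c^2 d$ is a square in $\mathbb{F}_q^*$. Thus the scalars $c_1,\ldots,c_{(q-1)/2}$ may be described intrinsically: for $q \equiv 1(4)$ they are exactly the elements $c \in \mathbb{F}_q^*$ with $1 - c^2 d$ a non-square; for $q \equiv 3(4)$ they are exactly the elements $c \in \mathbb{F}_q$ (including $c = 0$, which corresponds to the horizontal line through $1$) with $1 - c^2 d$ a non-zero square.

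For (ii), the easy values are $\varphi(1) = 1$ and, when $q \equiv 3(4)$, also $\varphi(-1) = 0 = 0\cdot\alpha$ and $\varphi(0) = -1$; these account for the elements $1$, the $c_i = 0$ representative, and $-1$ of the target. For a general $\gamma \in Q_0 \setminus \{\pm 1\}$ I use the fact that $Q_0 = \langle \omega^2\rangle$ consists of the squares of $Q$ to write $\gamma = \eta^2$ with $\eta = x + y\alpha \in Q \setminus \{\pm 1\}$. Lemma \ref{Q0Image2} then gives $\varphi(\gamma) = c\alpha$ with $c := x/(yd)$, where $y \neq 0$ (otherwise $\eta \in \mathbb{F}_q$ forces $\eta = \pm 1$) and $x \neq 0$ (otherwise $\gamma = \eta^2 = y^2 d = -1$, excluded). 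Using $x^2 - y^2 d = 1$ we compute
\[
1 - c^2 d \;=\; \frac{y^2 d - x^2}{y^2 d} \;=\; \frac{-1}{y^2 d}.
\]
Since $y^2 d$ is a non-square, Proposition \ref{sq}(1) shows that this expression is a non-square when $q \equiv 1(4)$ and a non-zero square when $q \equiv 3(4)$; hence $c\alpha$ lies in the target of (i).

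For (iii), $\varphi$ is a bijection by Lemma \ref{phiIsBijAndInv}, so $|\varphi(Q_0)| = |Q_0| = \tfrac{q+1}{2}$ when $q \equiv 1(4)$ and $|\varphi(Q_0 \cup \{0\})| = \tfrac{q+3}{2}$ when $q \equiv 3(4)$, matching the sizes of the respective target sets. Combined with the inclusion proved in (ii), this yields equality. The main obstacle I expect is the bookkeeping in the $q \equiv 3(4)$ case: one must notice that $-1 \in Q_0$ maps under $\varphi$ to $0$, and that $0$ is legitimately one of the $c_i\alpha$ (the one arising from the horizontal line through $1$); once the characterisation in (i) is set up so that $c = 0$ is manifestly included, everything else reduces to the one-line identity $1 - c^2 d = -1/(y^2 d)$ from step~(ii).
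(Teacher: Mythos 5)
Your proposal is correct and follows essentially the same route as the paper: the same key computation (via Lemma \ref{Q0Image2}, the image point is $\tfrac{x}{yd}\alpha$ and the relevant quadratic character is that of $-\tfrac{1}{y^2d}$, decided by Proposition \ref{sq}(1)), the same separate treatment of $0$ and $\pm 1$, and the same closing count via bijectivity of $\varphi$. Your reformulation of adjacency to $1$ as the condition that $1-c^2d$ be a square is just the paper's norm computation $N\left(1-\tfrac{x}{yd}\alpha\right)=-\tfrac{1}{y^2d}$ stated in advance, so the two arguments coincide.
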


\begin{proof}
Recall that $-1$ and $1$ have the same neighbours in $\alpha\mathbb{F}_q$.
So, for $q\equiv 1(4)$ (resp. $q\equiv 3(4)$), it suffices to show that $1$ has no neighbours in $\varphi(Q_0 \setminus \{1\})$ (resp. $1$ is adjacent to all elements of $\varphi(Q_0 \cup \{0\} \setminus \{1\})$). Note that
$$
\varphi(-1) = 0, \quad \varphi(0) = -1, \quad \varphi(1) = 1.
$$
Take an arbitrary element $\gamma \in Q_0 \setminus \{1\}$.
By definition, $Q_{0}$ consists of all squares from $Q$.
So, $\gamma = (\gamma_1)^2$ for some $\gamma_1 \in Q \setminus \{1, -1\}$.
Let $\gamma_1 = x+y\alpha$.
Then $\varphi(\gamma) = \frac{x}{yd}\alpha$ holds due to Lemma \ref{Q0Image2}.

Now check when the elements $1$ and $\frac{x}{yd}\alpha$ are adjacent in $P(q^2)$.
To do this, calculate the norm of their difference:
$$
N\left(1-\frac{x}{yd}\alpha\right) = 1 - \frac{x^2d}{y^2d^2} = 1 - \frac{x^2}{y^2d} = \frac{y^2d - x^2}{y^2d} = -\frac{1}{y^2d} = -1 \cdot \frac{1}{y^2} \cdot \frac{1}{d}.
$$
In the last product, $\frac{1}{d}$ is a non-square; $\frac{1}{y^2}$ is a square; by Proposition \ref{sq}(1), $-1$ is a square if and only if $q\equiv 1(4)$.
Hence $-\frac{1}{y^2d}$ is a non-square if and only if $q\equiv 1(4)$.
Thus for $q\equiv 1(4)$ (resp. $q\equiv 3(4)$),
the elements $1$ and $\frac{x}{yd}\alpha$ are non-adjacent (resp. adjacent). $\square$
\end{proof}

    \begin{corollary}
    If $q \equiv 1(4)$, then $\varphi(Q_1 \cup \{1\})$ induces a complete bipartite graph with parts $\varphi(Q_1)$ and $\{1\}$.
    If $q \equiv 3(4)$, then $\varphi(Q_1 \cup \{0,1\})$ induces a disjoint union of the clique $\varphi(Q_1)$ and the edge $\{1,-1\}$.
    \end{corollary}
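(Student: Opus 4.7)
\medskip
\noindent\textbf{Proof plan.} The starting point is the bijection between $Q \setminus \{1\}$ and $\alpha \mathbb{F}_q$ induced by $\varphi$ (the corollary to Proposition \ref{Q0Image1}). Since $1$ is a square in $\mathbb{F}_{q^2}^*$ we have $1 \in Q_0$, hence $Q_1 \subseteq Q \setminus \{1\}$ and $\varphi(Q_1) \subseteq \alpha \mathbb{F}_q$; moreover $\varphi(Q_1)$ and $\varphi(Q_0 \setminus \{1\})$ partition $\alpha \mathbb{F}_q$. The internal structure of $\varphi(Q_1)$ then follows at once from Proposition \ref{TwoSpecialLines}(2): the line $\alpha \mathbb{F}_q$ is non-quadratic for $q \equiv 1(4)$ and quadratic for $q \equiv 3(4)$, so $\varphi(Q_1)$ is an independent set in the first case and a clique in the second.

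Next, I would determine the adjacencies between $1$ and the elements of $\varphi(Q_1)$ by combining Theorem \ref{3corresponds2} with a pencil count. Theorem \ref{3corresponds2} implies that every element of $\varphi(Q_0 \setminus \{1\})$ is non-adjacent to $1$ when $q \equiv 1(4)$ and adjacent to $1$ when $q \equiv 3(4)$. Proposition \ref{LinesThroughAPoint} applied to the pencil through $1$, together with the observation that $1 + \alpha \mathbb{F}_q$ has slope $\alpha$ and therefore the same quadratic character as $\alpha \mathbb{F}_q$, shows that exactly $(q-1)/2$ points of $\alpha \mathbb{F}_q$ are non-adjacent to $1$ when $q \equiv 1(4)$, and exactly $(q-1)/2$ points are adjacent to $1$ when $q \equiv 3(4)$. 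Since $|\varphi(Q_0 \setminus \{1\})| = (q-1)/2$, a size match identifies $\varphi(Q_0 \setminus \{1\})$ with those $(q-1)/2$ points, and hence $\varphi(Q_1)$ with the complementary $(q+1)/2$ points of $\alpha \mathbb{F}_q$ carrying the opposite adjacency to $1$.

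Combining these ingredients yields the two cases. For $q \equiv 1(4)$, $\varphi(Q_1)$ is independent and each of its elements is adjacent to $1$, producing the complete bipartite graph with parts $\varphi(Q_1)$ and $\{1\}$. For $q \equiv 3(4)$, $\varphi(Q_1)$ is a clique whose elements are all non-adjacent to $1$; the computation $N(c\alpha - 1) = 1 - c^2 d = N(c\alpha + 1)$ shows that $-1$ has the same neighbours in $\alpha \mathbb{F}_q$ as $1$, while $1$ and $-1$ are themselves adjacent because $\mathbb{F}_q$ is a quadratic line by Proposition \ref{TwoSpecialLines}(1). This gives the disjoint union of the clique $\varphi(Q_1)$ with the edge $\{1, -1\}$.

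The main bookkeeping subtlety arises in the $q \equiv 3(4)$ case: one must verify that $-1 \in Q_0$, so that $0 = \varphi(-1)$ already sits in $\varphi(Q_0)$ and the element $\varphi(0) = -1$ is the extra vertex contributed by adjoining $0$ on the opposite side of the partition. This follows from $-1$ being the unique involution $\omega^{(q+1)/2}$ of the cyclic group $Q = \langle \omega \rangle$, which lies in $Q_0 = \langle \omega^2 \rangle$ exactly when $(q+1)/2$ is even.
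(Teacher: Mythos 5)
Your proposal is correct, and it is essentially the derivation the paper intends: the corollary is stated without proof as an immediate consequence of Theorem \ref{3corresponds2}, the bijection $Q\setminus\{1\}\to\alpha\mathbb{F}_q$, and the quadratic character of the line $\alpha\mathbb{F}_q$, which is exactly the complementation-plus-counting argument you carry out (and your check that $-1\in Q_0$ precisely when $q\equiv 3(4)$ is the right bookkeeping point). One small slip in justification: the inference ``$1$ is a square in $\mathbb{F}_{q^2}^*$, hence $1\in Q_0$'' is a non sequitur, since every element of $Q=\langle\beta^{q-1}\rangle$ is a square in $\mathbb{F}_{q^2}^*$ (e.g.\ $\omega\in Q_1$); the correct and trivial reason is that $Q_0=\langle\omega^2\rangle$ is a subgroup and so contains the identity. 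This does not affect the validity of the rest of the argument.
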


\subsection{$\psi$-correspondence between $\alpha Q$- and
{\rm (}$\mathbb{F}_q,\alpha{\rm )}$-constructions}

Define the second mapping $\psi:\mathbb{F}_{q^2} \mapsto \mathbb{F}_{q^2}$ as
$$
\psi(\gamma):= \alpha \varphi(\alpha^{-1} \gamma) =
\begin{cases}
\frac{\alpha\gamma+d}{\gamma-\alpha} & \text{if~~} \gamma \not= \alpha,\\
~~~\alpha & \text{if~~} \gamma=\alpha.
\end{cases}
$$
Its properties are direct corollaries of the results from Section \ref{mappingPhi}.

    \begin{proposition}\label{allAboutAlphaQ}
    The following statements hold.
    \begin{itemize}
    \item[\rm (1)] The mapping $\psi$ is a bijection and an involution.
    \item[\rm (2)] Let $\gamma$ be an element from $Q \setminus \{1\}$, where $\gamma = x + y\alpha$ for some $x,y \in \mathbb{F}_{q}$.
    Then
    $$
    \displaystyle \psi(\alpha\gamma) = \frac{yd}{x-1}.
    $$
    \item[\rm (3)] The set $\alpha Q\setminus \{\alpha\}$ is mapped to $\mathbb{F}_{q}$ by $\psi$ bijectively.
    \item[\rm (4)]
    Let $\gamma$ be an element from $Q \setminus \{1, -1\}$, where $\gamma = x + y\alpha$ for some $x,y \in \mathbb{F}_{q}$.
    Then
    $$
    \displaystyle \psi(\alpha\gamma^2) = \frac{x}{y}.
    $$
    \end{itemize}
    \end{proposition}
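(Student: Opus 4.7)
The whole proposition is essentially a corollary of the results already proved for $\varphi$ in Section \ref{mappingPhi}, via the simple observation that $\psi$ is the conjugate of $\varphi$ by multiplication by $\alpha$: writing $M_a$ for multiplication by $a\in\mathbb{F}_{q^2}^*$, the defining identity $\psi(\gamma)=\alpha\varphi(\alpha^{-1}\gamma)$ says $\psi = M_\alpha\circ\varphi\circ M_{\alpha^{-1}}$. My plan is therefore to verify each of the four statements by plugging into this factorisation, invoking the corresponding statement about $\varphi$, and then simplifying using $\alpha^2=d$.

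For (1), I would observe that $M_\alpha$ is a bijection of $\mathbb{F}_{q^2}$ with inverse $M_{\alpha^{-1}}$, so that $\psi$ is a composition of bijections and hence itself a bijection by Lemma \ref{phiIsBijAndInv}. For the involution property, a one-line calculation gives
\[
\psi^2 = M_\alpha\circ\varphi\circ M_{\alpha^{-1}}\circ M_\alpha\circ\varphi\circ M_{\alpha^{-1}} = M_\alpha\circ\varphi^2\circ M_{\alpha^{-1}} = M_\alpha\circ M_{\alpha^{-1}} = \mathrm{id},
\]
again using Lemma \ref{phiIsBijAndInv}. For (2), I would compute $\psi(\alpha\gamma) = \alpha\,\varphi(\alpha^{-1}\cdot\alpha\gamma) = \alpha\,\varphi(\gamma)$, and then substitute the formula of Proposition \ref{Q0Image1} and use $\alpha^2 = d$ to get $\alpha\cdot\tfrac{y}{x-1}\alpha = \tfrac{yd}{x-1}$. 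For (4), the same manœuvre together with Lemma \ref{Q0Image2} gives $\psi(\alpha\gamma^2) = \alpha\,\varphi(\gamma^2) = \alpha\cdot\tfrac{x}{yd}\alpha = \tfrac{x}{y}$. For (3), I would note that the corollary following Proposition \ref{Q0Image1} shows $\varphi$ maps $Q\setminus\{1\}$ bijectively onto $\alpha\mathbb{F}_q$; composing on the left with $M_\alpha$ and on the right with $M_{\alpha^{-1}}$ therefore gives a bijection from $\alpha(Q\setminus\{1\}) = \alpha Q\setminus\{\alpha\}$ onto $\alpha\cdot\alpha\mathbb{F}_q = \alpha^2\mathbb{F}_q = d\mathbb{F}_q = \mathbb{F}_q$, since $d\in\mathbb{F}_q^\ast$.

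I do not expect any real obstacle here: every statement reduces mechanically to its counterpart for $\varphi$ via the conjugation $\psi=M_\alpha\circ\varphi\circ M_{\alpha^{-1}}$ and the single identity $\alpha^2=d$. The only small matter of care is to track the two exceptional values of $\gamma$ (namely $\gamma=\alpha$ in the definition of $\psi$, and $\gamma=1,-1$ in the hypotheses of (2) and (4)), to make sure the rational expressions being manipulated have all the denominators one would expect to be nonzero; these are handled by the very exclusions placed on the hypotheses in each part, so no additional argument is needed.
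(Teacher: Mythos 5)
Your proposal is correct and matches the paper exactly: the paper gives no detailed proof, stating only that the properties of $\psi$ are direct corollaries of the results on $\varphi$ from Section \ref{mappingPhi}, and your conjugation identity $\psi = M_\alpha\circ\varphi\circ M_{\alpha^{-1}}$ together with $\alpha^2=d$ is precisely the intended mechanism. Your careful tracking of the excluded values $\gamma=1,-1$ (which in particular guarantees $y\neq 0$ in part (4)) supplies all the detail the paper leaves implicit.
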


We conclude this section with the second main result of the paper.
It follows from Theorem \ref{3corresponds2} and Lemma \ref{multiplicationByAlpha}.

    \begin{theorem}
    Let $\alpha Q_0$ be the set from Construction \ref{C4}.
    If $q \equiv 1(4)$, then $\psi(\alpha Q_0)$ coincides with the maximal clique
    from Construction \ref{C1}, that is,
    $$
    \psi(\alpha Q_0) = \{\alpha, c_1, \ldots, c_{\frac{q-1}{2}}\}.
    $$
    If $q \equiv 3(4)$, then $\psi(\alpha Q_0 \cup \{0\})$ coincides with the maximal clique from Construction \ref{C1}, that is,
    $$
    \psi(\alpha Q_0 \cup \{0\}) = \{\pm \alpha, c_1, \ldots, c_{\frac{q-1}{2}}\}.
    $$
    \end{theorem}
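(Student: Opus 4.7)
The plan is to reduce the statement to Theorem \ref{3corresponds2} by exploiting the defining relation $\psi(\gamma) = \alpha\,\varphi(\alpha^{-1}\gamma)$. This relation immediately yields the commutation $\psi(\alpha\gamma) = \alpha\,\varphi(\gamma)$ for every $\gamma \in \mathbb{F}_{q^2}$, hence $\psi(\alpha S) = \alpha\,\varphi(S)$ for any subset $S \subseteq \mathbb{F}_{q^2}$. A direct check gives $\psi(0) = -\alpha = \alpha\,\varphi(0)$, so the identity extends to $S = Q_0 \cup \{0\}$; consequently,
$$
\psi(\alpha Q_0) = \alpha\,\varphi(Q_0), \qquad \psi(\alpha Q_0 \cup \{0\}) = \alpha\,\varphi(Q_0 \cup \{0\}),
$$
and it only remains to describe these right-hand sides.

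I would then substitute Theorem \ref{3corresponds2}. For $q \equiv 1(4)$, the theorem identifies $\varphi(Q_0)$ with the maximal independent set of Construction \ref{C2}, which lies in $\{1\} \cup \alpha\mathbb{F}_q$; multiplying by $\alpha$ and using $\alpha^2 = d \in \mathbb{F}_q^*$ places $\psi(\alpha Q_0)$ inside $\{\alpha\} \cup \mathbb{F}_q$. The analogous computation for $q \equiv 3(4)$ places $\psi(\alpha Q_0 \cup \{0\})$ inside $\{\pm\alpha\} \cup \mathbb{F}_q$. Lemma \ref{multiplicationByAlpha} then guarantees that each of these images is a maximal clique of size $\frac{q+r(q)}{2}$ in $P(q^2)$.

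Finally, I would identify the resulting clique with the one from Construction \ref{C1} by a uniqueness argument: any clique containing $\alpha$ whose remaining elements lie in $\mathbb{F}_q$ must be contained in $\{\alpha\}$ together with the neighbours of $\alpha$ in $\mathbb{F}_q$, and Proposition \ref{LinesThroughAPoint} shows there are exactly $\frac{q-1}{2}$ such neighbours, namely $c_1, \ldots, c_{\frac{q-1}{2}}$ from Construction \ref{C1}; a size count then forces equality, and when $q \equiv 3(4)$, Proposition \ref{sqq} gives $\pm\alpha$ the same $\mathbb{F}_q$-neighbourhood, pinning down $\{\pm\alpha, c_1, \ldots, c_{\frac{q-1}{2}}\}$. \emph{The only nontrivial ingredient} is the initial commutation $\psi \circ (\alpha\,\cdot) = (\alpha\,\cdot) \circ \varphi$; once it is in hand, Lemma \ref{multiplicationByAlpha} handles the parity switch between cliques and independent sets, and the final identification is purely combinatorial.
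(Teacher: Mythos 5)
Your proposal is correct and follows essentially the same route as the paper, which derives this theorem directly from Theorem \ref{3corresponds2} and Lemma \ref{multiplicationByAlpha} via the relation $\psi(\alpha\gamma)=\alpha\varphi(\gamma)$. The only difference is that you make explicit the final identification of $\alpha\varphi(Q_0)$ with the set of Construction \ref{C1} by a counting argument on the neighbours of $\alpha$ in $\mathbb{F}_q$, a step the paper leaves implicit in its remark that the sets of Constructions \ref{C1} and \ref{C2} correspond under $\gamma\mapsto\alpha\gamma$.
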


    \begin{corollary}
    If $q \equiv 1(4)$, then $\psi(\alpha Q_1 \cup \{\alpha\})$ is
    a disjoint union of the clique
     $\psi(\alpha Q_1)$ and the vertex $\alpha$.
    If $q \equiv 3(4)$, then $\psi(\alpha Q_1 \cup \{0,\alpha\})$ is a disjoint union of the clique $\psi(\alpha Q_1)$ and the edge $\{\alpha,-\alpha\}$.
    \end{corollary}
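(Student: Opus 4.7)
The plan is to bootstrap from the preceding theorem, using that $\psi$ is an involutive bijection (Proposition \ref{allAboutAlphaQ}(1)) that sends $\alpha Q \setminus \{\alpha\}$ bijectively onto $\mathbb{F}_q$ (Proposition \ref{allAboutAlphaQ}(3)). Two extra values do all the work of swapping cosets and are immediate from the definition of $\psi$: namely $\psi(\alpha) = \alpha$ and $\psi(0) = \alpha\varphi(0) = -\alpha$.

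First I would note that $\alpha Q = \alpha Q_0 \sqcup \alpha Q_1$ with $\alpha \in \alpha Q_0$ (because $1 \in Q_0$), while $0 \notin \alpha Q$. The preceding theorem, together with the two identities above, then gives $\psi(\alpha Q_0) = \{\alpha, c_1, \ldots, c_{\frac{q-1}{2}}\}$ for both parities of $q$ modulo $4$; combined with Proposition \ref{allAboutAlphaQ}(3) this forces
$$\psi(\alpha Q_1) = \mathbb{F}_q \setminus \{c_1, \ldots, c_{\frac{q-1}{2}}\}.$$
Since $\psi(\alpha Q_1) \subset \mathbb{F}_q$, every pairwise difference of its elements lies in $\mathbb{F}_q^*$, which sits inside the squares of $\mathbb{F}_{q^2}^*$; hence $\psi(\alpha Q_1)$ is automatically a clique in $P(q^2)$. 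Moreover, by the defining property of the $c_i$ in the $(\mathbb{F}_q,\alpha)$-construction, the $c_i$ are precisely the neighbors of $\alpha$ in $\mathbb{F}_q$, so the elements of $\psi(\alpha Q_1)$ are exactly the non-neighbors of $\alpha$ inside $\mathbb{F}_q$.

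For $q \equiv 1(4)$, the identity $\psi(\alpha) = \alpha$ yields $\psi(\alpha Q_1 \cup \{\alpha\}) = \psi(\alpha Q_1) \cup \{\alpha\}$, a disjoint union because $\alpha \notin \mathbb{F}_q \supset \psi(\alpha Q_1)$, and $\alpha$ has no neighbor in $\psi(\alpha Q_1)$ by the previous paragraph. For $q \equiv 3(4)$ we additionally pick up $\psi(0) = -\alpha$; Proposition \ref{sqq} implies that $\alpha$ and $-\alpha$ share the same neighbors in $\mathbb{F}_q$, so neither is adjacent to any element of $\psi(\alpha Q_1)$, while $\alpha$ and $-\alpha$ themselves are adjacent because the line $\alpha\mathbb{F}_q$ through them is quadratic by Proposition \ref{TwoSpecialLines}(2). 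This produces precisely the disjoint union of the clique $\psi(\alpha Q_1)$ with the edge $\{\alpha,-\alpha\}$.

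The main obstacle, such as it is, is coset bookkeeping: one must verify $\alpha \in \alpha Q_0$ and $0 \notin \alpha Q$ so that the theorem pins down $\psi(\alpha Q_1)$ as the complement of $\{c_1, \ldots, c_{\frac{q-1}{2}}\}$ inside $\mathbb{F}_q$ rather than inside $\mathbb{F}_q \cup \{\alpha\}$, and that $-\alpha = \psi(0)$ really is a fresh vertex outside $\psi(\alpha Q_1)$. Once those placements are secured, the corollary reduces to a direct consequence of the theorem, the bijection property of $\psi$, and the two special values $\psi(\alpha)$, $\psi(0)$, together with Propositions \ref{sqq} and \ref{TwoSpecialLines}(2) in the $q \equiv 3(4)$ case.
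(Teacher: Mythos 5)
Your proposal is correct and follows the route the paper intends: the corollary is stated without proof as an immediate consequence of the preceding theorem, Proposition \ref{allAboutAlphaQ} (bijectivity of $\psi$ from $\alpha Q\setminus\{\alpha\}$ onto $\mathbb{F}_q$, plus the special values $\psi(\alpha)=\alpha$, $\psi(0)=-\alpha$), the fact that $\mathbb{F}_q$ is a clique, and Propositions \ref{sqq} and \ref{TwoSpecialLines}(2) for the edge $\{\alpha,-\alpha\}$ when $q\equiv 3(4)$. Your bookkeeping ($1\in Q_0$, $0\notin\alpha Q$, the $c_i$ being exactly the neighbours of $\alpha$ in $\mathbb{F}_q$) is exactly the verification the paper leaves implicit.
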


\section*{Acknowledgments}
Alexander Masley is supported by
a grant from the United States-Israel Binational Science Foundation (BSF), Jerusalem, Israel,
and by The Center for Absorption in Science, Ministry of Immigrant  Absorption, State of Israel. Leonid Shalaginov is supported by RFBR according to the research project
20-51-53023.
The authors are grateful to the anonymous referee for careful reading and helpful comments, which significantly improved the paper.
The authors are also grateful to Chi Hoi Yip for pointing out the reference \cite[Theorem 5.2]{HS91}.
Special thanks go to Rhys Evans for useful remarks concerning the paper at its final stage.

\newpage
\section*{Appendix}

\begin{figure}[h]
\begin{center}
\begin{minipage}[h]{0.45\linewidth}
\includegraphics[width=1\linewidth]{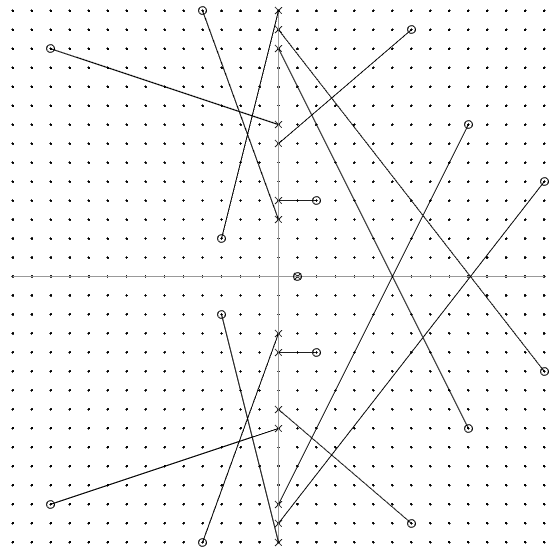}
\caption{$q=29$, $Q_0$, $\varphi(Q_0)$}
\end{minipage}
\quad
\begin{minipage}[h]{0.45\linewidth}
\includegraphics[width=1\linewidth]{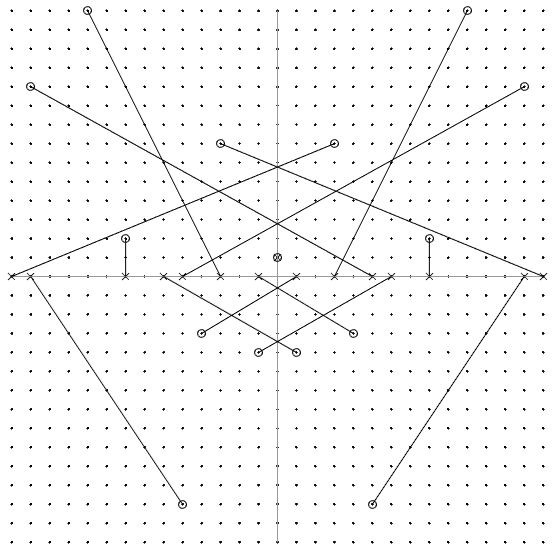}
\caption{$q=29$, $\alpha Q_0$, $\psi(\alpha Q_0)$}
\end{minipage}
\end{center}
\end{figure}

Let $q=29$ and $d=2$.
Then the sets from Constructions $1$ -- $4$ are
\begin{center}
$\left\{ \alpha,~\pm1,~\pm3,~\pm5,~\pm6,~\pm8,~\pm13,~\pm14 \right\},$ \\[4.5pt]
$\left\{ 1, \pm3\alpha,~\pm4\alpha,~\pm7\alpha,~\pm8\alpha,~\pm12\alpha,~\pm13\alpha,~\pm14\alpha \right\},$ \\[4.5pt]
$Q_0 = \left\{ -12 \pm 12\alpha,~-4 \pm 14\alpha,~-3 \pm 2\alpha,~1, \right.$ \\[3pt]
$\left.2 \pm 4\alpha,~7 \pm 13\alpha,~10 \pm 8\alpha,~14 \pm 5\alpha \right\},$ \\[4.5pt]
$\alpha Q_0 = \left\{\pm 5-12\alpha,~\pm 1-4\alpha,~\pm 4-3\alpha,~\alpha, \right.$ \\[3pt]
$\left. \pm 8 + 2\alpha,~\pm 3 + 7\alpha,~\pm13+10\alpha,~\pm 10+14\alpha \right\}.$
\end{center}
The mappings
$$\varphi: Q_0 \mapsto \left\{ 1, \ldots, \pm 14 \alpha \right\}
\quad \text{and} \quad
\psi: \alpha Q_0 \mapsto \left\{ \alpha, \ldots, \pm 14 \right\}$$
act as follows (see also Fig. 1 and Fig. 2):

\bigskip
\noindent
\begin{minipage}{0.5\textwidth}
\small
\begin{center}
    \begin{tabular}{||c|c||}
    \hline
    $Q_0$ & $\varphi(Q_0)$ \\
    \hline \hline
    $-3 + 2\alpha$ & $14\alpha$ \\
    \hline
    $14 - 5\alpha$ & $13\alpha$ \\
    \hline
    $10 - 8\alpha$ & $12\alpha$ \\
    \hline
    $-12 + 12\alpha$ & $8\alpha$ \\
    \hline
    $7 + 13\alpha$ & $7\alpha$ \\
    \hline
    $2 + 4\alpha$ & $4\alpha$ \\
    \hline
    $-4 + 14\alpha$ & $3\alpha$ \\
    \hline
    $1$ & $1$ \\
    \hline
    $-4 - 14\alpha$ & $-3\alpha$ \\
    \hline
    $2 - 4\alpha$ & $-4\alpha$ \\
    \hline
    $7 - 13\alpha$ & $-7\alpha$ \\
    \hline
    $-12 - 12\alpha$ & $-8\alpha$ \\
    \hline
    $10 + 8\alpha$ & $-12\alpha$ \\
    \hline
    $14 + 5\alpha$ & $-13\alpha$ \\
    \hline
    $-3 - 2\alpha$ & $-14\alpha$ \\
    \hline
    \end{tabular}
    \end{center}
\end{minipage}
\begin{minipage}{0.5\textwidth}
\small
    \begin{center}
    \begin{tabular}{||c|c||}
    \hline
    $\alpha Q_0$ & $\psi(\alpha Q_0)$ \\
    \hline \hline
    $-3 + 7\alpha$ & $14$ \\
    \hline
    $5-12\alpha$ & $13$ \\
    \hline
    $8 + 2\alpha$ & $8$ \\
    \hline
    $-1-4\alpha$ & $6$ \\
    \hline
    $-13+10\alpha$ & $5$ \\
    \hline
    $10+14\alpha$ & $3$ \\
    \hline
    $-4-3\alpha$ & $1$ \\
    \hline
    $\alpha$ & $\alpha$ \\
    \hline
    $4-3\alpha$ & $-1$ \\
    \hline
    $-10+14\alpha$ & $-3$ \\
    \hline
    $13+10\alpha$ & $-5$ \\
    \hline
    $1-4\alpha$ & $-6$ \\
    \hline
    $-8 + 2\alpha$ & $-8$ \\
    \hline
    $-5-12\alpha$ & $-13$ \\
    \hline
    $3 + 7\alpha$ & $-14$ \\
    \hline
    \end{tabular}
    \end{center}
\end{minipage}

\bigskip
\begin{figure}[h]
\begin{center}
\begin{minipage}[h]{0.45\linewidth}
\includegraphics[width=1\linewidth]{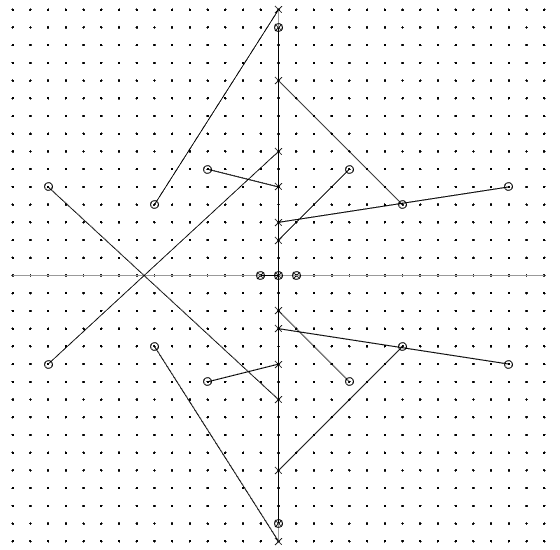}
\caption{$q=31$, $Q_0 \cup \{0\}$, $\varphi(Q_0 \cup \{0\})$}
\end{minipage}
\quad
\begin{minipage}[h]{0.45\linewidth}
\includegraphics[width=1\linewidth]{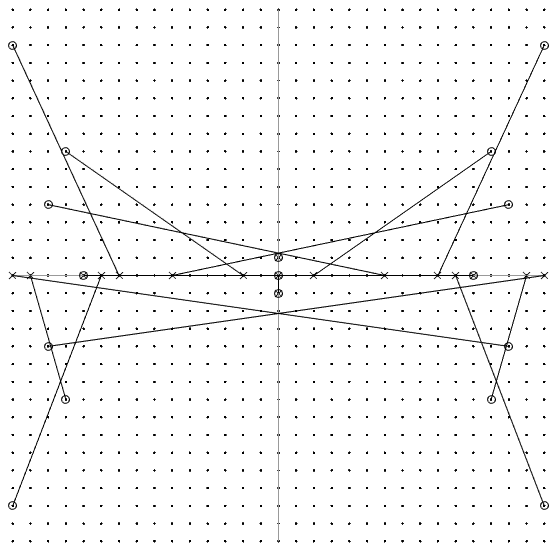}
\caption{$q=31$, $\alpha Q_0 \cup \{0\}$, $\psi(\alpha Q_0 \cup \{0\})$}
\end{minipage}
\end{center}
\end{figure}

\bigskip
Let $q=31$ and $d=3$.
Then the sets from Constructions $1$ -- $4$ are
\begin{center}
$\left\{
\pm\alpha,~0,~\pm2,~\pm6,~\pm9,~\pm10,~\pm11,~\pm14,~\pm15
\right\},$ \\[4.5pt]
$\left\{ 0,~\pm1,~\pm2\alpha,~\pm3\alpha,~\pm5\alpha,~\pm7\alpha,~\pm11\alpha,~\pm14\alpha,~\pm15\alpha \right\},$ \\[4.5pt]
$Q_0 \cup \{0\} = \left\{0,~\pm 1,~\pm4 \pm 6\alpha,~\pm 7 \pm 4\alpha,~\pm 13 \pm 5\alpha,~\pm14\alpha \right\},$ \\[4.5pt]
$\alpha Q_0 \cup \{0\} = \left\{0,~\pm11,~\pm\alpha,~\pm 13 \pm 4\alpha,~\pm 12 \pm 7\alpha,~ \pm 15 \pm 13\alpha
    \right\}.$
\end{center}

The mappings
$$\varphi: Q_0 \cup \{0\} \mapsto \left\{ 0, \ldots, \pm 15\alpha \right\} \quad \text{and} \quad \psi: \alpha Q_0 \cup \{0\} \mapsto \left\{ \pm \alpha, \ldots, \pm 15 \right\}$$
act as follows (see Fig. 3 and Fig. 4):

\bigskip
\noindent
\begin{minipage}{0.5\textwidth}
\small
    \begin{center}
    \begin{tabular}{||c|c||}
    \hline
    $Q_0 \cup \{0\}$ & $\varphi(Q_0 \cup \{0\})$ \\
    \hline \hline
    $-7 + 4\alpha$ & $15\alpha$ \\
    \hline
    $-14\alpha$ & $14\alpha$ \\
    \hline
    $7 + 4\alpha$ & $11\alpha$ \\
    \hline
    $-13 - 5\alpha$ & $7\alpha$ \\
    \hline
    $-4 + 6\alpha$ & $5\alpha$ \\
    \hline
    $13 + 5\alpha$ & $3\alpha$ \\
    \hline
    $4 + 6\alpha$ & $2\alpha$ \\
    \hline
    $1$ & $1$ \\
    \hline
    $-1$ & $0$ \\
    \hline
    $0$ & $-1$ \\
    \hline
    $4 - 6\alpha$ & $-2\alpha$ \\
    \hline
    $13 - 5\alpha$ & $-3\alpha$ \\
    \hline
    $-4 - 6\alpha$ & $- 5\alpha$ \\
    \hline
    $-13 + 5\alpha$ & $- 7\alpha$ \\
    \hline
    $7 - 4\alpha$ & $-11\alpha$ \\
    \hline
    $14\alpha$ & $-14\alpha$ \\
    \hline
    $-7 - 4\alpha$ & $-15\alpha$ \\
    \hline
    \end{tabular}
    \end{center}
\end{minipage}
\begin{minipage}{0.5\textwidth}
\small
    \begin{center}
    \begin{tabular}{||c|c||}
    \hline
    $\alpha Q_0 \cup \{0\}$ & $\psi(\alpha Q_0 \cup \{0\})$ \\
    \hline \hline
    $-13 - 4\alpha$ & $15$ \\
    \hline
    $12-7\alpha$ & $14$ \\
    \hline
    $-11$ & $11$ \\
    \hline
    $15 -13\alpha$ & $10$ \\
    \hline
    $15+13\alpha$ & $9$ \\
    \hline
    $-13 + 4\alpha$ & $6$ \\
    \hline
    $12+7\alpha$ & $2$  \\
    \hline
    $\alpha$ & $\alpha$ \\
    \hline
    $-\alpha$ & $0$ \\
    \hline
    $0$ & $-\alpha$ \\
    \hline
    $-12+7\alpha$ & $-2$  \\
    \hline
    $13 + 4\alpha$ & $-6$ \\
    \hline
    $-15+13\alpha$ & $-9$ \\
    \hline
    $-15 -13\alpha$ & $-10$ \\
    \hline
    $11$ & $-11$ \\
    \hline
    $-12-7\alpha$ & $-14$ \\
    \hline
    $13 - 4\alpha$ & $-15$ \\
    \hline
    \end{tabular}
    \end{center}
\end{minipage}

\end{document}